\theoremstyle{plain}
\newtheorem{theorem}{Theorem}[section]
\newtheorem{lemma}[theorem]{Lemma}
\newtheorem{prop}[theorem]{Proposition}
\theoremstyle{definition}
\newtheorem{remark}[theorem]{Remark} %in romanico
\font\ninerm=cmr9  \font\eightrm=cmr8  \font\sixrm=cmr6
\font\ninei=cmmi9  \font\eighti=cmmi8  \font\sixi=cmmi6
\font\ninesy=cmsy9 \font\eightsy=cmsy8 \font\sixsy=cmsy6
\font\ninebf=cmbx9 \font\eightbf=cmbx8 \font\sixbf=cmbx6
\font\nineit=cmti9 \font\eightit=cmti8 
\font\ninett=cmtt9 \font\eighttt=cmtt8 
\font\ninesl=cmsl9 \font\eightsl=cmsl8
\font\twelverm=cmr12 at 15pt
\font\twelvei=cmmi12 at 15pt
\font\twelvesy=cmsy10 at 15pt
\font\twelvebf=cmbx12 at 15pt
\font\twelveit=cmti12 at 15pt
\font\twelvett=cmtt12 at 15pt
\font\twelvesl=cmsl12 at 15pt
\font\twelvegoth=eufm10 at 15pt
\font\tengoth=eufm10  \font\ninegoth=eufm9
\font\eightgoth=eufm8 \font\sevengoth=eufm7 
\font\sixgoth=eufm6   \font\fivegoth=eufm5
\def\goth{\fam\gothfam\tengoth} 
\newskip\ttglue
\def\tenpoint{\def\rm{\fam0\tenrm}
  \textfont0=\tenrm \scriptfont0=\sevenrm
  \scriptscriptfont0\fiverm
  \textfont1=\teni \scriptfont1=\seveni
  \scriptscriptfont1\fivei 
  \textfont2=\tensy \scriptfont2=\sevensy
  \scriptscriptfont2\fivesy 
  \textfont3=\tenex \scriptfont3=\tenex
  \scriptscriptfont3\tenex 
  \textfont\itfam=\tenit\def\it{\fam\itfam\tenit}%
  \textfont\slfam=\tensl\def\sl{\fam\slfam\tensl}%
  \textfont\ttfam=\tentt\def\tt{\fam\ttfam\tentt}%
  \textfont\gothfam=\tengoth\scriptfont\gothfam=\sevengoth 
  \scriptscriptfont\gothfam=\fivegoth
  \def\goth{\fam\gothfam\tengoth}
  \textfont\bffam=\tenbf\scriptfont\bffam=\sevenbf
  \scriptscriptfont\bffam=\fivebf
  \def\bf{\fam\bffam\tenbf}%
  \tt\ttglue=.5em plus.25em minus.15em
  \normalbaselineskip=12pt \setbox\strutbox\hbox{\vrule
  height8.5pt depth3.5pt width0pt}%
  \let\big=\tenbig\normalbaselines\rm}
\def\ninepoint{\def\rm{\fam0\ninerm}
  \textfont0=\ninerm \scriptfont0=\sixrm
  \scriptscriptfont0\fiverm
  \textfont1=\ninei \scriptfont1=\sixi
  \scriptscriptfont1\fivei 
  \textfont2=\ninesy \scriptfont2=\sixsy
  \scriptscriptfont2\fivesy 
  \textfont3=\tenex \scriptfont3=\tenex
  \scriptscriptfont3\tenex 
  \textfont\itfam=\nineit\def\it{\fam\itfam\nineit}%
  \textfont\slfam=\ninesl\def\sl{\fam\slfam\ninesl}%
  \textfont\ttfam=\ninett\def\tt{\fam\ttfam\ninett}%
  \textfont\gothfam=\ninegoth\scriptfont\gothfam=\sixgoth 
  \scriptscriptfont\gothfam=\fivegoth
  \def\goth{\fam\gothfam\tengoth}
  \textfont\bffam=\ninebf\scriptfont\bffam=\sixbf
  \scriptscriptfont\bffam=\fivebf
  \def\bf{\fam\bffam\ninebf}%
  \tt\ttglue=.5em plus.25em minus.15em
  \normalbaselineskip=11pt \setbox\strutbox\hbox{\vrule
  height8pt depth3pt width0pt}%
  \let\big=\ninebig\normalbaselines\rm}
\def\eightpoint{\def\rm{\fam0\eightrm}
  \textfont0=\eightrm \scriptfont0=\sixrm
  \scriptscriptfont0\fiverm
  \textfont1=\eighti \scriptfont1=\sixi
  \scriptscriptfont1\fivei 
  \textfont2=\eightsy \scriptfont2=\sixsy
  \scriptscriptfont2\fivesy 
  \textfont3=\tenex \scriptfont3=\tenex
  \scriptscriptfont3\tenex 
  \textfont\itfam=\eightit\def\it{\fam\itfam\eightit}%
  \textfont\slfam=\eightsl\def\sl{\fam\slfam\eightsl}%
  \textfont\ttfam=\eighttt\def\tt{\fam\ttfam\eighttt}%
  \textfont\gothfam=\eightgoth\scriptfont\gothfam=\sixgoth 
  \scriptscriptfont\gothfam=\fivegoth
  \def\goth{\fam\gothfam\tengoth}
  \textfont\bffam=\eightbf\scriptfont\bffam=\sixbf
  \scriptscriptfont\bffam=\fivebf
  \def\bf{\fam\bffam\eightbf}%
  \tt\ttglue=.5em plus.25em minus.15em
  \normalbaselineskip=9pt \setbox\strutbox\hbox{\vrule
  height7pt depth2pt width0pt}%
  \let\big=\eightbig\normalbaselines\rm}
\def\twelvepoint{\def\rm{\fam0\twelverm}
  \textfont0=\twelverm\scriptfont0=\tenrm
  \scriptscriptfont0\sevenrm
  \textfont1=\twelvei\scriptfont1=\teni
  \scriptscriptfont1\seveni 
  \textfont2=\twelvesy\scriptfont2=\tensy
  \scriptscriptfont2\sevensy 
   \textfont\itfam=\twelveit\def\it{\fam\itfam\twelveit}%
  \textfont\slfam=\twelvesl\def\sl{\fam\slfam\twelvesl}%
  \textfont\ttfam=\twelvett\def\tt{\fam\ttfam\twelvett}%
  \textfont\gothfam=\twelvegoth\scriptfont\gothfam=\ninegoth 
  \scriptscriptfont\gothfam=\sevengoth
  \def\goth{\fam\gothfam\twelvegoth}
  \textfont\bffam=\twelvebf\scriptfont\bffam=\ninebf
  \scriptscriptfont\bffam=\sevenbf
  \def\bf{\fam\bffam\twelvebf}%
  \tt\ttglue=.5em plus.25em minus.15em
  \normalbaselineskip=12pt \setbox\strutbox\hbox{\vrule
  height9pt depth4pt width0pt}%
  \let\big=\twelvebig\normalbaselines\rm}
\def\nsmallskip{\smallskip\noindent}
\def\bbigskip{\bigskip\bigskip}
\def\nbigskip{\bigskip\noindent}
\def\nmedskip{\medskip\noindent}
\def\buildunder#1#2{\mathrel{\mathop{\kern0pt #2}
\limits_{#1}}}
\def\buildover#1#2{\buildrel#1\over#2}
\def\qq{/\kern-.185em /}
\def\REM #1{}
\begin{document}

\title[On hyperbolicity]{On hyperbolicity of SU(2)-equivariant, punctured disc bundles
over the complex affine quadric}

\bbigskip

\author[Iannuzzi]{A. Iannuzzi}

\address{Andrea Iannuzzi: Dip.\ di Matematica,
Universit\`a di Roma  ``Tor Vergata", Via della Ricerca Scientifica,
I-00133 Roma, Italy.} 
\email{iannuzzi@mat.uniroma2.it}

\thanks {\ \ {\it Mathematics Subject Classification (2010):} 
32L05, 32M5, 32Q28, 32Q45}

\thanks {\ \ {\it Key words}: holomorphic line bundle, Kobayashi hyperbolicity,
Stein manifold}

\begin{abstract}
Given a holomorphic line bundle over the complex affine quadric
$\,Q^2$, we investigate its Stein, $\,SU(2)$-equivariant  disc
bundles. Up to equivariant
biholomorphism, these are all contained in a maximal one,
say $\,\Omega_{max}$. By removing the zero section from
 $\,\Omega_{max}\,$
one obtains the unique Stein, $\,SU(2)$-equivariant, punctured disc
bundle over $\,Q^2\,$ which contains entire curves.
All other such  punctured disc bundles 
are shown to be  Kobayashi hyperbolic.
\end{abstract}
 
\maketitle

%-------------------------------------------------------------------
%-----------------INTRO---------------------------------------
%-------------------------------------------------------------------

\section{Introduction} 

\bigskip
Consider a Reinhardt domain $\,D_\rho\,$ in $\,{\mathbb{C}} \times {\mathbb{C}}^*\,$ of 
the form
$$\,\{\,(w,z)\in {\mathbb{C}} \times {\mathbb{C}}^* \ : \ |z| \rho(|w|) < 1 \,\},$$
 where $\,\rho:{\mathbb{R}} \to {\mathbb{R}}^{>0}\,$ is an even, 
 upper semicontinuous function.
Let $\,S^1\,$
act on $\,D_\rho\,$ by $\,e^{it}\cdot (w,z):=  (e^{it} w,z)$.
Then $\,D_\rho\,$ can be regarded
as an $\,S^1$-invariant punctured disc bundle over $\,{\mathbb{C}}$, with
$\,S^1$-equivariant projection
$\,D_\rho \to {\mathbb{C}}\,$ given by $\,(w,z) \to w$.
By rescaling the fiber coordinate one can normalize every
 $\,D_\rho\,$ so  that $\,\rho(0)=1$.

Note that $\,D_\rho\,$ is Stein if and only if $\,\rho\,$ is logarithmically convex,
i.e. if $\,\log  \rho\,$ is convex. 
Under this assumption one has the
extremal case $\,\rho \equiv 1$, corresponding to the
trivial punctured disc bundle $\,D_{max}= {\mathbb{C}} \times \Delta^*$.
Here $\,\Delta^*\,$ denotes the punctured unit disc  in $\,{\mathbb{C}}$.
All other  Stein, normalized, punctured disc bundles 
   are contained in $\,D_{max}$.
These correspond to non constant, logarithmically convex $\,\rho\,$ with
$\,\rho(0)=1$. In particular 
$\,\lim \rho(h) = \infty\,$ as $\,h \to \infty\,$ which, 
by a simple argument,  implies that every
non-maximal, Stein, punctured disc bundle $\,D_\rho\,$
is  Kobayashi hyperbolic.
Then, by a result of Zwonek (\cite{Zwo}),  one also 
knows that $\,D_\rho\,$ is biholomorphic to a bounded Reinhardt domain.

 Let $\,U^{\mathbb{C}}=SL(2,{\mathbb{C}})\,$ 	and 
$\,K^{\mathbb{C}} \,$ be the universal complexifications of
$$\,U :=SU(2) \quad {\rm and} \quad
K:=\left  \{\left (\begin{matrix} e^{iy}   &  0  \cr
                 0  & e^{-iy}    \cr   
\end{matrix} \right )  \ : \ y \in {\mathbb{R}}
\right \},$$
respectively.
Here we are interested in  $\,U$-equivariant
disc bundles over the complex affine 
quadric $\,Q^2 \cong U^{\mathbb{C}}/K^{\mathbb{C}}$. In the sequel  $\,K^{\mathbb{C}} \,$ is identified with $\,{\mathbb{C}}^*\,$
via the Lie group isomorphism given by
$$\, \left (\begin{matrix}  \zeta   &  0  \cr
                  0  & \zeta^{-1}    \cr   
\end{matrix} \right ) \to \zeta \,.$$
One checks that every holomorphic line bundle over
$\,Q^2\,$ is isomorphic to a homogeneous line bundle of the form (cf. Sect.\,2)
 $$\,L^m := U^{\mathbb{C}} \times _{\chi^m} {\mathbb{C}},$$
 where $\,m \in {\mathbb{Z}}\,$ and  the character $\,\chi^m:K^{\mathbb{C}} \to {\mathbb{C}}^* \,$ is defined by 
 $\, \chi^m(\zeta)=\zeta^m$.
Consider the symmetric decomposition $\,{\mathfrak{k}} \oplus {\mathfrak{p}}\,$ of the Lie algebra
$\,{\mathfrak{u}}\,$ of $\,U\,$ associated to the compact symmetric space $\,S^2 \cong U/K$,
and let $\,{\mathfrak{a}}\,$ be the  maximal abelian subalgebra  in $\,{\mathfrak{p}}\,$ generated by
a chosen element $\,H\,$ in  $\,{\mathfrak{p}}$.
By a  result of Mostow (\cite{Mos})
one has the decomposition $\,U^{\mathbb{C}}=U \exp (i{\mathfrak{a}}) K^{\mathbb{C}}$.
Then  a $\,U$-equivariant, punctured disc bundle in $\,L^m\,$
is uniquely defined by (cf. Sect. 3)
$$\Omega_{\rho}:=\{\,[g,z]\in L^m \ : \ |z||\zeta|^m\rho(h)<1  \,\}\,,$$
where $\,u \exp (i h H) \zeta^{-1}\,$ is a Mostow decomposition of 
$\,g\,$ and $\,\rho:{\mathbb{R}} \to {\mathbb{R}}^{>0}\,$ is an even, upper semicontinuous
function. Moreover one shows that $\,\Omega_\rho\,$ is Stein if and only if the function
$\,U^{\mathbb{C}} \to {\mathbb{R}}$, given by  
$\,g \to  |\zeta|^m \rho(h)$, 
is logarithmically plurisubharmonic (Prop.$\,$\ref{STEIN}).
 Warning: the function  $\, \ g \to \log |\zeta|\,$ is not plurisubharmonic. 

By acting fiberwise with a suitable element of $\,\exp (i{\mathfrak{k}})\,$ one can
 normalize $\,\Omega_\rho\,$ so that  $\,\rho(0)=1$.
Then, for all $\,m\in {\mathbb{Z}}\,$ one finds a maximal Stein, $\,U$-equivariant disc bundle 
$\,\Omega_{max}\,$ defined
 by $\,\rho_{max}(h):= (\cosh (2h))^{|m|/2}.$
It turns out that  the associated punctured disc bundle 
$\,\Omega_{max}^*$, which is obtained by removing the
zero section, is not Kobayashi 
hyperbolic. Indeed its universal covering admits  a proper $\,{\mathbb{C}}$-action. Moreover one shows  (Thm.$\,$\ref{MAIN})

\medskip
\noindent
{\it All other normalized, Stein, $\,U$-equivariant, punctured
disc bundles are
contained in $\,\Omega_{max}^*\,$ and are Kobayashi hyperbolic.}

\medskip
As an application we give a new proof of a known characterization
of the 3-dimensional, bounded symmetric domain of type IV
(Thm.$\,$\ref{APPLICATION}).

\bigskip
\noindent
{\bf Acknowledgement.}$\ $
I wish to thank Stefano Trapani for several helpful and pleasant
discussions.

 %-------------------------------------------------------------------
%-----------------Line bundles----------------------------
%-------------------------------------------------------------------

\bigskip
\section{Line bundles over $\,Q^2$.}

\bigskip
Here  all holomorphic line bundles
over the affine complex quadric $\,Q^2\,$ are shown to 
be isomorphic to homogeneous line bundles of the form $\,U^{\mathbb{C}} \times_{\chi^m}{\mathbb{C}}$,
with  $\,\chi^m\,$ a character of $\,K^{\mathbb{C}}$.

Recall that the homogeneous bundle $\, U \times_K {\mathfrak{p}}:=
(U \times {\mathfrak{p}})/K$, where $\,K\,$ acts on $\,U \times {\mathfrak{p}}\,$ by 
$\, k \cdot (u,X):= (uk^{-1}, Ad_kX)$, 
can be identified with the tangent bundle of the compact symmetric space
$\,S^2 \cong U/K\,$
via the $\,U$-equivariant  diffeomorphism $\,U \times_K  {\mathfrak{p}} \to TS^2$,
defined by $\,[u,X] \to u_*(X)$. Here $\,{\mathfrak{p}}\,$ is identified with the tangent space of
$\,S^2\,$ at the base point via the differential of the canonical projection
$\,U \to S^2$. 

As a consequence of Mostow's decomposition
(\cite{Mos},$\,$Lemma$\,$4.1, cf.$\,$\cite{Las}, Thm. D 
and \cite{HeSc},$\,$Sect.$\,$9)
one also has a $\,U$-equivariant
identification of $\, U \times_K {\mathfrak{p}}
\to  U^{\mathbb{C}}/K^{\mathbb{C}} \cong Q^2\,$ given by $\,[u,X] \to u \exp (iX) K^{\mathbb{C}}$.
Hence one obtains an identification of
$\,U^{\mathbb{C}}/K^{\mathbb{C}}\,$ with the tangent bundle of $\,U/K$.

Realize the sphere $\,S^2 \cong U/K\,$ as the zero section of its tangent 
bundle via the immersion $\,\iota:U/K \to U^{\mathbb{C}}/K^{\mathbb{C}}\,$ defined
by $\,uK \to uK^{\mathbb{C}}$. Let 
$$\,B=\left \{ \left (\begin{matrix}  \zeta   &  0  \cr
                  \beta  & \zeta^{-1}      \cr    
\end{matrix} \right ) \ : \ \zeta \in {\mathbb{C}}^*, \ \beta \in {\mathbb{C}}\,\right \}\,$$
be the isotropy at $\,[0:1]\,$ with respect to the standard
linear $\,U^{\mathbb{C}}$-action on $\,{\mathbb{P}}^1$.
Consider the projection $\,\pi:U^{\mathbb{C}}/K^{\mathbb{C}} \to
U^{\mathbb{C}}/B\,$ given by $\,uK^{\mathbb{C}} \to uB$. One has the natural identifications
$\,U^{\mathbb{C}}/B \cong {\mathbb{P}}^1 \cong S^2\,$ and $\,\pi \circ \iota = Id_{S_2}$.
On the other hand the composition 
 $\,\iota \circ \pi\,$ is  the fiberwise projection onto the zero section
in the tangent bundle $\,U^{\mathbb{C}}/K^{\mathbb{C}}$, therefore it is homotopic
to  $\,Id_{U^{\mathbb{C}}/K^{\mathbb{C}}}$. It follows that $\,\iota\,$ is a homotopic equivalence
and consequently
$$\pi^*:H^2({\mathbb{P}}^1,{\mathbb{Z}}) \to H^2(Q^2,{\mathbb{Z}})$$

\smallskip
\noindent
is an isomorphism. Since $\,H^1({\mathbb{P}}^1,{\mathcal{O}}^*)=H^2({\mathbb{P}}^1,{\mathbb{Z}}) \,$ and
$\, H^1(Q^2,{\mathcal{O}}^*)=H^2(Q^2,{\mathbb{Z}})$, this gives an isomorphism
among the groups of holomorphic line bundles
$$\pi^*:Pic({\mathbb{P}}^1) \to Pic(Q^2)\,.$$
Now recall that
$$\,Pic({\mathbb{P}}^1)=\{\hat L^m:= U^{\mathbb{C}} \times_{\hat \chi^m} {\mathbb{C}} \ : \ m \in {\mathbb{Z}}\}\,,$$
where  $\,\hat \chi^m\,$ is the character of $\,B\,$ defined by 
$\, \left (\begin{matrix}  \zeta   &  0  \cr
                  \beta  & \zeta^{-1}    \cr    
\end{matrix} \right ) \to \zeta^m\,$ and $\,U^{\mathbb{C}} \times_{\hat \chi^m} {\mathbb{C}}\,$
is the quotient of $\,U^{\mathbb{C}} \times {\mathbb{C}}\,$ with respect to the $\,B$-action
defined by $\,b \cdot (g,z)= (gb^{-1}, \hat \chi^m(b)z)$. Indeed, since 
a homogeneous  bundle is uniquely defined by the isotropy representation on the 
fiber at the base point, one has 
$$\hat L^{m+n} =\hat L^{m} \otimes \hat L^{n}\,.$$
Then it is enough to note
that the generator $\,\hat L^{-1}\,$ of the group $\,\{\hat L^m \ : m \in {\mathbb{Z}}\}\,$
is biholomorphic to 
the tautological line bundle $\,T \subset {\mathbb{P}}^1 \times {\mathbb{C}}^2\,$
over $\,{\mathbb{P}}^1\,$ (cf.$\,$\cite{GrHa})
via the the map
$$\, [g,z] \to \left (g  \left [\begin{matrix}  0  \cr
                  1    \cr    
\end{matrix} \right ], zg \left (\begin{matrix}  0  \cr
                  1    \cr    
\end{matrix} \right ) \right ),$$
where the  action of $\,U^{\mathbb{C}}\,$ on $\,{\mathbb{C}}^2\,$ is
the standard linear one.

Finally consider the homogeneous bundles over
$\,Q^2\,$ of the form
$\,L^m := U^{\mathbb{C}} \times _{\chi^m} {\mathbb{C}}$, where  $\,\chi^m\,$
is the character on $\,K^{\mathbb{C}}\,$ defined by $\,\zeta \to \zeta^m$.
One has the canonical projection $\,U^{\mathbb{C}} \times_{\chi^m} {\mathbb{C}} \to U^{\mathbb{C}}/K^{\mathbb{C}}\,$ given
by $\, [g,z] \to gK^{\mathbb{C}}\,$ and 
 the bundle projection  $\,U^{\mathbb{C}} \times_{\chi^m} {\mathbb{C}} \to U^{\mathbb{C}} \times _{\hat \chi^m} {\mathbb{C}}
 \,$ defined by $\,[g,z] \to   [g,z]$.
 Moreover the  
diagram 
$$\begin{matrix} U^{\mathbb{C}} \times _{\chi^m} {\mathbb{C}}     &  \to   &  U^{\mathbb{C}} \times _{\hat \chi^m} {\mathbb{C}}  \cr
                                   &                 &           \cr    
            \downarrow  &                       & \downarrow \cr
                                   &                 &                \cr
           U^{\mathbb{C}}/K^{\mathbb{C}}  &     \buildover{\pi}  \to    &   U^{\mathbb{C}}/B  \,,\cr 
\end{matrix} $$
whose vertical maps are the canonical
 $\,U^{\mathbb{C}}$-equivariant projections,
is commutative. It follows that  $\,\pi^*(U^{\mathbb{C}} \times _{\hat \chi^m} {\mathbb{C}})
= U^{\mathbb{C}} \times _{\chi^m} {\mathbb{C}}\,$ which, by the above remarks
implies the following

\bigskip
\begin{prop}
\label{NORMAL}
Every holomorphic line bundle over the affine complex quadric 
$\,Q^2\,$ is isomorphic to a homogeneous line bundle 
$\,L^m := U^{\mathbb{C}} \times _{\chi^m} {\mathbb{C}}$, for some  $\,m \in {\mathbb{Z}}$.
\end{prop}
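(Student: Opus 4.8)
The plan is to reduce the holomorphic classification of line bundles over $Q^2$ to the elementary classification over $\mathbb{P}^1$, exploiting the two facts already established above: that $\pi:Q^2\to\mathbb{P}^1$ is a homotopy equivalence, and that the pullback $\pi^*$ identifies the homogeneous bundle $U^{\mathbb{C}}\times_{\hat\chi^m}\mathbb{C}$ with $U^{\mathbb{C}}\times_{\chi^m}\mathbb{C}$. All the geometric scaffolding — the identification $Q^2\cong U^{\mathbb{C}}/K^{\mathbb{C}}$ with the tangent bundle of $S^2$, the zero section $\iota$, the projection $\pi$, and the relations $\pi\circ\iota=\mathrm{Id}$ and $\iota\circ\pi\simeq\mathrm{Id}$ — is already in place, so the proof is essentially a matter of assembling it.

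First I would pass from topology to holomorphy via the exponential sheaf sequence, justifying the identifications $H^1(Q^2,\mathcal{O}^*)=H^2(Q^2,\mathbb{Z})$ and $H^1(\mathbb{P}^1,\mathcal{O}^*)=H^2(\mathbb{P}^1,\mathbb{Z})$ used above. Since $Q^2$ is an affine variety it is Stein, so $H^1(Q^2,\mathcal{O})=H^2(Q^2,\mathcal{O})=0$; the long exact cohomology sequence then yields $Pic(Q^2)\cong H^2(Q^2,\mathbb{Z})$ through the first Chern class, and the classical vanishing $H^1(\mathbb{P}^1,\mathcal{O})=H^2(\mathbb{P}^1,\mathcal{O})=0$ gives $Pic(\mathbb{P}^1)\cong H^2(\mathbb{P}^1,\mathbb{Z})$. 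Because $\pi^*$ is compatible with both the exponential sequences and the Chern classes, the topological isomorphism $\pi^*:H^2(\mathbb{P}^1,\mathbb{Z})\to H^2(Q^2,\mathbb{Z})$ coming from the homotopy equivalence transports to a group isomorphism $\pi^*:Pic(\mathbb{P}^1)\to Pic(Q^2)$.

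Next I would invoke the classification $Pic(\mathbb{P}^1)\cong\mathbb{Z}$, with generator the tautological bundle $\hat L^{-1}$, so that every element is $\hat L^m=U^{\mathbb{C}}\times_{\hat\chi^m}\mathbb{C}$ for a unique $m\in\mathbb{Z}$. Given an arbitrary holomorphic line bundle $L$ over $Q^2$, surjectivity of $\pi^*$ produces an $m$ with $L\cong\pi^*\hat L^m$. Finally the commutative diagram of homogeneous bundles identifies this pullback: $\pi^*\big(U^{\mathbb{C}}\times_{\hat\chi^m}\mathbb{C}\big)=U^{\mathbb{C}}\times_{\chi^m}\mathbb{C}=L^m$, the point being that the character $\hat\chi^m$ of $B$ restricts to $\chi^m$ on the subgroup $K^{\mathbb{C}}\subset B$. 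Hence $L\cong L^m$, as claimed.

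The main obstacle I anticipate is not any single hard computation but the careful verification that the two analytic identifications are natural with respect to $\pi^*$ — that is, that the square relating $Pic$ to $H^2$ for $\mathbb{P}^1$ and for $Q^2$ commutes. This naturality, together with the Steinness of $Q^2$ that forces the coherent cohomology to vanish, is precisely what upgrades the homotopy-theoretic statement about $\pi$ into the desired holomorphic classification; everything else is bookkeeping already carried out in the preceding discussion.
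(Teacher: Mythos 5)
Your proposal is correct and follows essentially the same route as the paper: the homotopy equivalence $\iota\circ\pi\simeq\mathrm{Id}$ giving $\pi^*:H^2(\mathbb{P}^1,\mathbb{Z})\to H^2(Q^2,\mathbb{Z})$, the exponential-sequence identifications of $Pic$ with $H^2(\cdot,\mathbb{Z})$ (which the paper states without the Steinness justification you supply), the classification of $Pic(\mathbb{P}^1)$ via the tautological generator, and the identification $\pi^*(U^{\mathbb{C}}\times_{\hat\chi^m}\mathbb{C})=U^{\mathbb{C}}\times_{\chi^m}\mathbb{C}$ from the commutative diagram. No gaps; your spelling out of the naturality of the exponential sequences under $\pi^*$ is a welcome extra detail.
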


%-------------------------------------------------------------------
%-----------------STEIN U-invariand disc bundles--------------------
%-------------------------------------------------------------------

\bigskip
\section{Stein, $\,U$-equivariant disc bundles over $\,Q^2$}

\bigskip
Define a disc bundle in $\,L^m\,$ as a subdomain whose intersection with 
every fiber of the canonical projection onto $\,Q^2 \cong U^{\mathbb{C}}/K^{\mathbb{C}}\,$
consists of a disc of finite radius.
As a consequence of Mostow's decomposition, one has
$$U^{\mathbb{C}}= U \exp (i {\mathfrak{a}}) K^{\mathbb{C}}\,,$$
with $\,\mathfrak{a}\,$ a maximal abelian subalgebra of
$\,\mathfrak{p}\,$ (cf. Sect. 1).
Moreover every $\,U$-orbit in $\,U^{\mathbb{C}}/K^{\mathbb{C}}\,$ meets 
the ``slice" $\,\exp (i{\mathfrak{a}}) K^{\mathbb{C}}\,$ in an orbit of the Weyl group 
$\,W \cong {\mathbb{Z}}_2$. Here the non trivial element of the 
$\,W$-action is  given by reflection  in $\,{\mathfrak{a}}$. 

In particular $\,U  \backslash U^{\mathbb{C}}/K^{\mathbb{C}}\,$ is homeomorphic to 
$\,{\mathfrak{a}}/W\,$ and for every fixed $\,m \in {\mathbb{Z}}\,$ 
there is a one-to-one correspondence  among $\,U$-equivariant 
disc bundles in $\,L^m\,$ and even, upper semicontinuous,
positive functions on $\,{\mathfrak{a}}$. Namely, let $\,{\mathfrak{a}}\,$ be generated by 
$\,H:= \left (\begin{matrix}  0   &  -1  \cr
                  1 & \ 0  \cr    
\end{matrix} \right )$. Then an even, upper semicontinuous,
positive function $\,\rho: {\mathbb{R}} \to {\mathbb{R}}^{>0}\,$ defines a unique 
$\,U$-equivariant  disc bundle in $\,L^m\,$ by 
$$\,\Omega_{\rho}:= \{\, [g,z] \in U^{\mathbb{C}} \times_{\chi^m}
{\mathbb{C}} \ : \ |z| |\zeta |^m \rho(h) < 1 \,\}\,,$$
 where $\,u \exp (ihH) \zeta^{-1}\,$ is a Mostow decomposition of $\, g$.
 Let $\,U\times K\,$ act on $\,U^{\mathbb{C}}\,$ by $\,(u,k)\cdot g := ugk^{-1}$.
 
 It is easy to check that the $\,U \times K$-invariant function
 $\,U^{\mathbb{C}} \to {\mathbb{R}}^{> 0}, $ defined by $\,g \to |\zeta |^m \rho(h)$,
 does not depend on the chosen decomposition of $\,g\,$
 and consequently  $\,\Omega_{\rho}\,$ is well defined. 
Also note that  such a function defines a $\,U$-invariant
hermitian norm on $\,L^m$.

\bigskip
\begin{prop}

\noindent
{\rm (i)}  The $\,U$-equivariant  disc bundle $\,\Omega_{\rho}\,$ is Stein if and
only if the $\,U\times K$-invariant function $\, |\zeta |^m \rho(h)\,$
defined on $\,U^{\mathbb{C}}\,$ is  logarithmically plurisubharmonic. 

\noindent
{\rm (ii)} If $\,\Omega_{\rho}\,$ is Stein then $\, \rho\,$ is logarithmically
convex. In particular $\,\rho\,$
is continuous and realizes a minimum at zero.
%Moreover, if $\,\rho\,$ is non-constant 
%{\rm (} for $\,m \not=0\,$ this  is always the case{\rm)} then
%$\,1/\rho(h) \to 0\,$ as $\,h \to \infty$.
\label{STEIN}
\end{prop}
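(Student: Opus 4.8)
The plan is to treat (i) as an instance of the standard correspondence between Steinness of a disc bundle and plurisubharmonicity of the weight of its Hermitian metric, and to obtain (ii) by testing the resulting plurisubharmonic function against a suitably chosen one-parameter subgroup.

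For (i), I would first note that $Q^2$ is an affine quadric, hence Stein, so the total space of the vector bundle $L^m$ is Stein as well. Since $\Omega_{\rho}$ is an open subdomain of the Stein manifold $L^m$, the Docquier--Grauert theorem reduces the problem to showing that $\Omega_{\rho}$ is pseudoconvex (locally Stein) in $L^m$ if and only if $|\zeta|^m\rho(h)$ is logarithmically plurisubharmonic on $U^{\C}$. Let $\Phi$ denote the globally defined nonnegative function on $L^m$ whose value at $[g,z]$ is $|z|^2|\zeta|^{2m}\rho(h)^2$, so that $\Omega_{\rho}=\{\Phi<1\}$ and $\Phi$ is the squared length for the associated $U$-invariant Hermitian norm. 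Because $\Phi$ is homogeneous of degree two along the fibers, $\Omega_{\rho}$ is a circular disc bundle, and its pseudoconvexity is governed entirely by the restriction of $\log\Phi$ to the complement $L^m_*$ of the zero section: the Levi condition at one boundary point of a fiber ray propagates by fiber scaling to the whole ray, so $\Omega_{\rho}$ is pseudoconvex exactly when $\log\Phi$ is plurisubharmonic on $L^m_*$.

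It then remains to transfer this condition to $U^{\C}$. Pulling $\Phi$ back along the quotient submersion $U^{\C}\times\C\to L^m$ gives $\tilde\Phi(g,z)=|z|^2|\zeta|^{2m}\rho(h)^2$, and since plurisubharmonicity is both preserved and reflected by this submersion, $\log\Phi$ is plurisubharmonic on $L^m_*$ iff $\log\tilde\Phi=2\log|z|+2\log\big(|\zeta|^m\rho(h)\big)$ is plurisubharmonic on $U^{\C}\times\C^*$. As $\log|z|$ is pluriharmonic and the variables separate, this holds precisely when $\log\big(|\zeta|^m\rho(h)\big)$ is plurisubharmonic on $U^{\C}$, which is the assertion of (i). The delicate points here are the correct treatment of the zero section (where $\log\Phi\equiv-\infty$) and the fiber-scaling argument; the warning that $\log|\zeta|$ itself is not plurisubharmonic shows that the condition in (i) is a genuine constraint and cannot be read off termwise.

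For (ii), assume $\Omega_{\rho}$ is Stein, so that by (i) the function $N(g):=|\zeta|^m\rho(h)$ has $\log N$ plurisubharmonic on $U^{\C}$. I would test it on the holomorphic one-parameter subgroup $w\mapsto\exp(wH)$, $w\in\C$, with $H$ the generator of $\a$ fixed in Section 3. Since $H$ is skew-Hermitian, for $w=\sigma+i\tau$ one has $\exp(wH)=\exp(\sigma H)\exp(i\tau H)$ with $\exp(\sigma H)\in U$, so a Mostow decomposition of $\exp(wH)$ has $\zeta=1$ and radial parameter $h=|\tau|$. Using that $\rho$ is even, the restriction of $\log N$ to this curve equals $\log\rho(\operatorname{Im}w)$. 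A plurisubharmonic function restricted to a holomorphic curve is subharmonic, and a subharmonic function depending only on $\operatorname{Im}w$ is necessarily convex in that variable; hence $\log\rho$ is convex on $\R$. Finally, a finite convex function on $\R$ is continuous, and an even convex function attains its minimum at the origin, completing (ii). I expect the main obstacle to lie in the rigorous equivalence of (i) --- in particular the descent of plurisubharmonicity through the quotient and the circular-domain pseudoconvexity criterion near the zero section --- whereas (ii) is then essentially the one-line computation of the Mostow data along $\exp(wH)$.
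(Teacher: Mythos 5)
Your argument is correct, and part (ii) coincides with the paper's: both restrict the plurisubharmonic function $\log\bigl(|\zeta|^m\rho(h)\bigr)$ to the entire curve $w\mapsto\exp(wH)$, observe that the Mostow data along it are $\zeta=1$, $h=\operatorname{Im}w$, and conclude convexity of $\log\rho$ from subharmonicity of a function of $\operatorname{Im}w$ alone. For part (i), however, you take a genuinely different route. The paper works upstairs on $U^{\mathbb{C}}\times{\mathbb{C}}$: it sets $O_\rho:=\Pi^{-1}(\Omega_\rho)$, invokes the Matsushima--Morimoto theorems (Serre's result for the principal ${\mathbb{C}}^*$-bundle $O_\rho\to\Omega_\rho$ in one direction, their Theorem~5 on quotients in the other) to show that $\Omega_\rho$ is Stein if and only if $O_\rho$ is, and then applies the global Hartogs-domain criterion to $O_\rho=\{|z|<|\zeta|^{-m}\rho(h)^{-1}\}$. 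You instead stay on $L^m$, use Steinness of the total space plus Docquier--Grauert to reduce to local pseudoconvexity, characterize that via the fiberwise-homogeneous function $\Phi$ (the local Hartogs radius argument for circular disc bundles), and only then descend the plurisubharmonicity condition through the quotient submersion. Both routes are sound and land on the same criterion; the paper's is shorter because the fibration theorems do the global work in one stroke, while yours trades those for the more elementary local Levi analysis at the cost of having to handle the zero section and the descent explicitly --- the two points you rightly flag as delicate. One small remark: for your ``if'' direction you should note that $\log\Phi$, extended by $-\infty$ on the zero section, is genuinely plurisubharmonic there (it is locally $2\log|z|$ plus a function of the base), so that $\Omega_\rho$ is a sublevel set of a plurisubharmonic function on all of $L^m$; with that observation the proof is complete.
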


\smallskip
\begin{proof} (i)
Let $\,\Pi:U^{\mathbb{C}} \times {\mathbb{C}}\ \to U^{\mathbb{C}} \times_{\chi^m}{\mathbb{C}}\,$ be the natural projection and 
$\,O_{\rho}:= \Pi^{-1}(\Omega_{\rho})$.
Then $\,O_{\rho}\,$ is a principal $\,{\mathbb{C}}^*$-bundle over $\,\Omega_{\rho}\,$
and, by a classical result of Serre (cf.$\,$\cite{MaMo}, Thm.$\,$4 and 6),
if  $\,\Omega_{\rho}\,$ is Stein so is $\,O_{\rho}$.
On the other hand  $\,\Omega_{\rho}\,$ is the 
quotient of $\,O_{\rho}\,$ with respect to the twisted $\,K^{\mathbb{C}}$-action.
%defined by $\,\zeta \cdot (g,z) := (g \zeta^{-1},\chi^m(\zeta) z)$.
Thus if  $\,O_{\rho}\,$ is Stein,  so is $\,\Omega_{\rho}\,$
by Theorem~5 in \cite{MaMo}.

Finally note that the generalized Reinhardt domain
$\,O_{\rho}= \{\,(g,z) \in U^{\mathbb{C}} \times {\mathbb{C}} \ : \ 
|z| < |\zeta|^{-m} \rho(h)^{-1} \}\,$ is Stein if and only if the 
function $\,U^{\mathbb{C}} \to {\mathbb{R}}$, given by $\,g \to
-\log (|\zeta|^{-m}\rho(h)^{-1})$, is plurisubharmonic (cf.~\cite{Vla},$\,$Sect.$\,$19.4).

\medskip 
\noindent
(ii) Let $\,f:{\mathbb{C}} \to U^{\mathbb{C}} \,$ be the holomorphic map
defined by $\,x +iy \to \exp (x+iy)H$.
By composing with the plurisubharmonic function 
$\,\log ( |\zeta|^{m}\rho(h))\,$ one obtains the  $\,{\mathbb{R}}$-invariant
function $\,{\mathbb{C}} \to {\mathbb{R}}$, given by $\,x+iy \to \log   \rho(y)$, whose 
subharmonicity is equivalent to convexity of   $\,\log \rho$.
The last part of the statement
follows from  elementary properties of convex, even functions on
$\,{\mathbb{R}}$. \qed
 \end{proof}
 
 \bigskip
 \begin{remark}
By \cite{AzLo},~Thm.$\,$1,~p.$\,$367,  the function  $\,\rho:{\mathbb{R}} \to {\mathbb{R}}^{>0}\,$
is logarithmically convex if and only if 
the $\,U\times K^{\mathbb{C}}$-invariant function on $\,U^{\mathbb{C}}$, defined  by 
$\, g \to \rho(h)$, is logarithmically plurisubharmonic. 
 \end{remark}

 \bigskip
 \begin{remark}
In the definition of a disc bundle one could allow the fibers to have infinite radius, i.e. 
the function $\,\rho\,$ to take values in $\,{\mathbb{R}}^{>0} \cup \{\infty\}$. Then,
for a Stein, $\,U$-equivariant disc bundle $\,\Omega_\rho$, 
the convexity of  $\,\rho\,$ would imply that either $\,\Omega_\rho= L^m\,$
or $\,\rho\,$ is real valued as in the above setting. That is,
no matter which definition one chooses, the above proposition describes
all proper, Stein, $\,U$-equivariant disc bundles over $\,Q^2$.
 \end{remark}

 %-------------------------------------------------------------------
%-----------------Some coordinates---------------------------
%-------------------------------------------------------------------

\bigskip
\section{Some coordinates}
\bigskip

For later use we introduce   some coordinates on the
double quotient $\,U \backslash U^{\mathbb{C}}/K$.
First consider the map
$$\,\Pi_1:U^{\mathbb{C}} \to U^{\mathbb{C}}\, , \quad \,g \to \sigma_U(g)^{-1}g\,,$$
\noindent
 where $\,\sigma_U:U^{\mathbb{C}} \to U^{\mathbb{C}}$, given by $\, g \to ^t\overline g^{-1}$,  is the 
 antiholomorphic involutive automorphism of $\,U^{\mathbb{C}}\,$
whose fixed point set is $\,U$.
Let $\,U\,$ act on $\,U^{\mathbb{C}}\,$ by left multiplication and
note that every fiber of $\,\Pi_1\,$ consists of a single $\,U$-orbit.
 Thus $\,\Pi_1(U^{\mathbb{C}})\,$ is set theoretically 
equivalent to $\, U\backslash U^{\mathbb{C}}\,$ and
$$\Pi_1:U^{\mathbb{C}} \to \Pi_1(U^{\mathbb{C}})$$

\smallskip
\noindent
is a realization of the quotient map.
Moreover, one checks  that 
$\,\Pi_1(U^{\mathbb{C}})\,$ consist of the connected component 
of  $\,\{\,g \in U^{\mathbb{C}} \ : \ \sigma_U(g)=g^{-1} \,\}$, explicitly given by
$${\mathcal Q} :=\left \{  \left ( \begin{matrix}  \ s  &  b   \cr
                   \overline b    &  t	  \cr 
                   \end{matrix} \right ) \, :
				   \,  s,t \in {\mathbb{R}}^{>0}, \ b \in {\mathbb{C}} \, \ {\rm and}\, \
				   st - |b|^2=1 \right \} .$$

\medskip
\noindent
Let us describe how the right $\,K$-action on $\,U^{\mathbb{C}}\,$
is transformed after applying  $\, \Pi_1$. 
An element of $\,K\,$ is given by $\,k=\exp (yC)\,$
for some real $\,y\,$ and $\,C:= \left ( \begin{matrix}  i  &  \ 0   \cr
                    0   &  -i	  \cr 
                    \end{matrix} \right )\,.$  Then one has 
$$\Pi_1(gk^{-1})
= \sigma_U(g \exp(-yC))^{-1}g\exp (-yC)=$$
$$\sigma_U(\exp(-yC))^{-1} \sigma_U(g)^{-1}g\exp (-yC)=k\Pi_1(g) k^{-1}$$
\noindent
Therefore $\, \Pi_1:U^{\mathbb{C}} \to {\mathcal Q}\,$ is $\,K$-equivariant, if one lets
$\,K\,$ act on $\,U^{\mathbb{C}}\,$ by right multiplication and on
$\,{\mathcal Q}\,$ by conjugation, i.e.

$$\exp (yC) \cdot \left ( \begin{matrix}  \ s  &  b   \cr
  \overline b    &  t	  \cr 
  \end{matrix} \right ) :=
\left ( \begin{matrix}  s  &  &  e^{2iy}b   \cr
\cr	
  e^{-2iy}\overline b    &  & t	  \cr
  \end{matrix} \right ),$$

\medskip
\noindent 
for every $\,y \in{\mathbb{R}}$.
In particular, after applying $\,\Pi_1$,
the $\,K$-action reads as
rotations on $\,b$. Let 
\smallskip
$$\,{\mathcal P}:=\{\, (s,t) \in {\mathbb{R}}^2 \ : \ st \ge 1\}$$ 

\smallskip
\noindent
and define 
$\,\Pi_2:{\mathcal Q} \to {\mathcal P}\,$ by
\smallskip
$$\left ( \begin{matrix}  \ s  &  b   \cr   
  \overline b    &  t	  \cr 
  \end{matrix} \right ) \to (s,t) \,.$$

\smallskip
\noindent
For every $\,(s,t) \in {\mathcal P}\,$ the inverse image 
$\,\Pi_2^{-1}(s,\, t)\,$ consists of 
a single $\,K$-orbit given by 
$$\left \{ \left ( \begin{matrix}  \ s  &  b   \cr   
  \overline b    &  t	  \cr 
  \end{matrix} \right ) \in {\mathcal Q} \ : \
  |b|^2=st-1 \right \}.$$
  
\nsmallskip
Hence $\,{\mathcal P}\,$
is a realization of 
the quotient $\,{\mathcal Q}/K \cong U \backslash U^{\mathbb{C}}/K\,$
and $\,(s,t)\,$ can be regarded as coordinates for  $\,U \backslash U^{\mathbb{C}}/K$.
Moreover the  composition map $\,\Pi_2 \circ \Pi_1\,$ is a realization of the
quotient map.

Now let $\,u \exp (i h H) \zeta^{-1}\,$ be a Mostow decomposition of an element
$\,g\,$ of $\,U^{\mathbb{C}}$, with $\,\zeta= e^{x+iy}$. One has
$$\Pi_2 \circ \Pi_1(g)= \Pi_1 \circ \Pi_2 \left (\exp (i h H)
\left ( \begin{matrix}  e^{-x}  &  0   \cr   
  0    &  e^{x}	  \cr 
  \end{matrix} \right ) \right)
  =$$
 $$ \Pi_2 \left ( \left ( \begin{matrix}  e^{-x}  &  0   \cr   
  0    &  e^{x}	  \cr
   \end{matrix} \right )
  \left ( \begin{matrix}  0  &  -2ih   \cr   
  2ih    &  0	  \cr 
  \end{matrix} \right )
  \left ( \begin{matrix} e^{-x}  &  0   \cr   
  0    &  e^{x}	  \cr 
  \end{matrix} \right ) \right )=
  (e^{-2x} \cosh 2h, e^{2x} \cosh 2h)\,.$$
 
 \nmedskip
  Then one can define the $\,U \times K$-invariant functions
  $\,|\zeta|= e^x\,$ and $\,h\,$ in terms of the coordinates $\,(s,t)\,$ on the 
  quotient $\,{\mathcal P} \cong U \backslash U^{\mathbb{C}}/K$. For this it is
  convenient to choose $\,h\,$ to be positive.
    
\bigskip
\begin{lemma}
\label{COORDIN} Let $\,u \exp (ihH) \zeta^{-1}\,$ be a Mostow
decomposition of an element $\,g\,$ in $\,U^{\mathbb{C}}$, with $\,h \ge 0$. 
\medskip
\item{\rm (i)} \ The $\,U \times K$-invariant function $\,g \to |\zeta|\,$ on $\,U^{\mathbb{C}}\,$ 
pushes down on $\,{\mathcal P}\,$ to 
$$|\zeta|= \root 4 \of {\frac{t}{s}}\,.$$

\medskip
\item{\rm (i)} \ The $\,U \times K$-invariant function $\,g \to h\,$ on $\,U^{\mathbb{C}}\,$ 
pushes down on $\,{\mathcal P}\,$ to 
$$h= \frac{1}{2} \,{\rm arccosh} \sqrt {st}\,.$$
\end{lemma}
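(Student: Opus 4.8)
The plan is to derive both formulas directly from the relation recorded immediately before the statement, so that the lemma reduces to inverting an explicit change of variables. Since the functions $\,g \mapsto |\zeta|\,$ and $\,g \mapsto h\,$ are $\,U \times K$-invariant, they descend to $\,{\mathcal P} \cong U \backslash U^{\mathbb{C}}/K\,$ along $\,\Pi_2 \circ \Pi_1$. Exploiting this $\,U \times K$-invariance I would first replace $\,g\,$ by the representative $\,\exp(ihH)\,{\rm diag}(e^{-x},e^x)$, where $\,\zeta = e^{x+iy}\,$ so that $\,|\zeta| = e^x$; the displayed computation then gives
$$(s,t) = \left(e^{-2x}\cosh 2h,\ e^{2x}\cosh 2h\right)\,.$$
It remains only to solve this pair for $\,e^x\,$ and $\,h\,$ in terms of $\,s\,$ and $\,t$.

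For part (i) I would take the quotient $\,t/s = e^{4x} = |\zeta|^4$. Because $\,|\zeta| = e^x > 0$, the positive fourth root is unambiguous and yields $\,|\zeta| = \sqrt[4]{t/s}$.

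For part (ii) I would take the product $\,st = \cosh^2 2h$. Here the hypothesis $\,h \ge 0\,$ is exactly what makes the inversion clean: it forces $\,\cosh 2h \ge 1 > 0$, so that $\,\sqrt{st} = \cosh 2h\,$ lies in the domain $\,[1,\infty)\,$ of $\,{\rm arccosh}\,$ (consistently with the defining inequality $\,st \ge 1\,$ of $\,{\mathcal P}$), and applying $\,{\rm arccosh}\,$ gives $\,h = \frac{1}{2}\,{\rm arccosh}\sqrt{st}$.

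There is no genuine obstacle in this argument; the only points demanding attention are the two branch choices, and both are settled by normalizations already in force---positivity of $\,|\zeta|\,$ for the fourth root and $\,h \ge 0\,$ for the arccosh. The one computation worth re-checking is the matrix identity producing $\,(s,t)$: this amounts to evaluating $\,\Pi_1(\exp(ihH)\,{\rm diag}(e^{-x},e^x)) = {}^t\overline{g}\,g\,$ and reading off its diagonal entries, using that $\,ihH\,$ is Hermitian, so that $\,\exp(ihH)^* = \exp(ihH)\,$ and the product telescopes to $\,{\rm diag}(e^{-x},e^x)\exp(2ihH)\,{\rm diag}(e^{-x},e^x)$. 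This is precisely the calculation displayed ahead of the lemma, so nothing new is required.
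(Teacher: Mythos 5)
Your proposal is correct and follows exactly the route the paper intends: the lemma is an immediate inversion of the displayed identity $\Pi_2\circ\Pi_1(g)=(e^{-2x}\cosh 2h,\ e^{2x}\cosh 2h)$, for which the paper supplies no separate proof. Taking the quotient and the product of $s$ and $t$, with the branch choices fixed by $|\zeta|>0$ and $h\ge 0$, is precisely the argument left implicit in the text.
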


\bigskip
\begin{remark}
\label{PLURI}
Note that if  $\,g= \left ( \begin{matrix}  z_1  &  z_3   \cr   
  z_2    &  z_4	  \cr 
  \end{matrix} \right )$, then
  $\,(s,t)= (|z_1|^2+|z_2|^2,|z_3|^2+|z_4|^2 )$.
  It follows that $\,\log t\,$ and $\,\log s\,$ are plurisubharmonic 
  functions on $\,U^{\mathbb{C}}$. 
  \end{remark}

%-------------------------------------------------------------------
%-----------------hyperbolicity ------------------
%-------------------------------------------------------------------

\bigskip
\section{Hyperbolicity}

\bigskip

Given a $\,U$-equivariant disc bundle $\,\Omega_{\rho}\,$ as in
section 4, the associated punctured disc bundle 
$\,\Omega_{\rho}^*:= \{\,[g,z] \in U^{\mathbb{C}} \times_{\chi^m}
{\mathbb{C}}^* \ : \ |z| |\zeta |^m \rho(h) < 1 \,\}\,$ is obtained by removing the zero section and can be regarded as a particular  annular bundle
(cf.$\,$\cite{Aba}).
Here we first show that, up to $\,U$-equivariant biholomorphism, every 
Stein, $\,U$-equivariant disc bundle $\,\Omega_{\rho}\,$ over $\,Q^2\,$ is contained in
a maximal one, say $\,\Omega_{max}$.  Then we note that
the  universal covering of the associated punctured disc bundle $\,\Omega_{max}^*\,$
admits a 
proper $\,{\mathbb{C}}$-action. In fact  $\,\Omega_{max}^*\,$ turns out to be the unique 
 Stein, $\,U$-equivariant punctured 
disc bundle over $\,Q^2\,$ which is not Kobayashi hyperbolic.
We need the following lemma. Let $\,{\mathbb{C}}^*\,$ act on $\,L^m\,$ by
fiberwise multiplication.

%------------------- DUALITY--------
\bigskip

\begin{lemma}
\label{DUALITY} There exists a  $\,{\mathbb{C}}^*$-equivariant 
biholomorphism  $\,\varphi:L^m \to L^{-m}\,$ which  maps  
$\,U$-equivariant, punctured disc bundles in $\,L^m\,$ 
onto $\,U$-equivariant, punctured disc bundles in  $\,L^{-m}$.
\end{lemma}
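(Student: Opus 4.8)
The plan is to realize $\varphi$ as the map induced by right multiplication by a representative of the nontrivial Weyl group element. Recall from Section~3 that $\,W\cong\Z_2\,$ acts on $\,\a\,$ by reflection, i.e. by $\,H\mapsto -H$. A convenient representative lying in $\,U\,$ is
$$w:=\exp\left(\frac{\pi}{2}H\right)=\left(\begin{matrix} 0 & -1 \cr 1 & \ 0\end{matrix}\right)\in U\,,$$
which normalizes $\,K^\C\,$ and, by a direct computation, satisfies $\,w^{-1}kw=k^{-1}\,$ for every $\,k\in K^\C$; equivalently, conjugation by $\,w\,$ induces the inversion $\,\zeta\mapsto\zeta^{-1}\,$ on $\,K^\C\cong\C^*$. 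I would then define $\,\varphi:L^m\to L^{-m}\,$ by $\,\varphi([g,z]):=[gw,z]$, where the right-hand side is read in $\,L^{-m}$.

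First I would check that $\,\varphi\,$ is well defined, and this is exactly the step where the character flips sign. If $\,(g,z)\,$ is replaced by $\,(gk^{-1},\chi^m(k)z)\,$ with $\,k\in K^\C$, then $\,gk^{-1}w=gw\,(w^{-1}k^{-1}w)=(gw)k$, so in $\,L^{-m}\,$ one computes $\,[gk^{-1}w,\chi^m(k)z]=[gw,\chi^{-m}(k)\chi^m(k)z]=[gw,z]$, using $\,\chi^{-m}(k)=\chi^m(k)^{-1}$. Thus $\,\varphi\,$ descends to the twisted quotient and genuinely lands in $\,L^{-m}$. Being induced by a holomorphic right translation and the identity on the fibre, $\,\varphi\,$ is holomorphic with holomorphic inverse $\,[g',z']\mapsto[g'w^{-1},z']$, hence a biholomorphism; it is manifestly $\,\C^*$-equivariant for the fibrewise multiplication and, since $\,w\,$ acts on the right, also $\,U$-equivariant for the left $\,U$-action. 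In particular it carries $\,U$-equivariant objects to $\,U$-equivariant ones and maps the zero section to the zero section, so punctured disc bundles go to punctured disc bundles in the weak sense; it remains to identify the image precisely.

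The crux is to track the defining inequalities under $\,\varphi$, which amounts to computing a Mostow decomposition of $\,gw$. Starting from $\,g=u\exp(ihH)\zeta^{-1}\,$ with $\,h\ge0$, I would use that $\,w\,$ commutes with $\,\exp(ihH)\,$ and that $\,\zeta^{-1}w=w\zeta\,$ (again from $\,w^{-1}\zeta^{-1}w=\zeta$) to obtain $\,gw=(uw)\exp(ihH)\,\zeta$, i.e. a Mostow decomposition of $\,gw\,$ with the same $\,h\ge0\,$ and with fibre parameter $\,\zeta'=\zeta^{-1}$. Hence under $\,\varphi\,$ the invariants transform by $\,h\mapsto h\,$ and $\,|\zeta|\mapsto|\zeta|^{-1}$. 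Feeding this into the description of a punctured disc bundle in $\,L^{-m}$, namely $\,\{\,|z|\,|\zeta'|^{-m}\rho(h)<1\,\}$, and using $\,|\zeta'|^{-m}=|\zeta|^m$, the condition becomes exactly $\,|z|\,|\zeta|^m\rho(h)<1$. Therefore $\,\varphi(\Omega_\rho^*)=\Omega_\rho^*$, the very same even, upper semicontinuous $\,\rho\,$ now defining a punctured disc bundle in $\,L^{-m}$, which proves the lemma. The main obstacle is purely this bookkeeping forcing the character to pass from $\,m\,$ to $\,-m$; everything hinges on the single identity $\,w^{-1}kw=k^{-1}$, so once the representative $\,w\,$ is correctly identified the rest is a routine verification.
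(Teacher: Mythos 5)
Your proof is correct and reaches the same (sharper) conclusion as the paper, namely that $\,\varphi\,$ carries $\,\Omega_{m,\rho}\,$ exactly onto $\,\Omega_{-m,\rho}\,$ with the same $\,\rho$, but by a different concrete map. The paper does not use right translation by a Weyl representative: it takes the automorphism $\,\hat\varphi\,$ of $\,U^{\mathbb{C}}\,$ integrating the Lie algebra automorphism $\,C\mapsto -C$, $\,H\mapsto -H$, $\,W\mapsto W$, which satisfies $\,\hat\varphi(k)=k^{-1}\,$ on $\,K^{\mathbb{C}}\,$ and $\,\hat\varphi(U)=U$, and sets $\,\varphi([g,z])=[\hat\varphi(g),z]$. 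In both arguments the engine is the same: an operation on $\,U^{\mathbb{C}}\,$ that inverts $\,K^{\mathbb{C}}\,$ and hence converts right $\,\chi^m$-twisting into right $\,\chi^{-m}$-twisting; your identity $\,w^{-1}kw=k^{-1}\,$ is the exact analogue of the paper's $\,\hat\varphi(k)=k^{-1}$. The differences are minor but real: your map is strictly $\,U$-equivariant for the left action (since $\,w\,$ acts on the right), whereas the paper's intertwines the left actions only up to the automorphism $\,\hat\varphi|_U\,$ of $\,U\,$ (which still suffices because $\,\hat\varphi(U)=U$); and in the Mostow data your map fixes $\,h\,$ and inverts $\,\zeta$, while the paper's sends $\,h\mapsto -h\,$ as well and therefore must invoke the evenness of $\,\rho$. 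One notational slip: your concluding equality $\,\varphi(\Omega^*_\rho)=\Omega^*_\rho\,$ should be read as $\,\varphi(\Omega^*_{m,\rho})=\Omega^*_{-m,\rho}$, since the two sides live in different bundles; with that understood, the verification is complete and covers surjectivity onto the class of $\,U$-equivariant punctured disc bundles in $\,L^{-m}\,$ because $\,\rho\,$ is arbitrary and $\,\varphi^{-1}\,$ is of the same form.
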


\smallskip
\begin{proof} Consider the basis of $\,{\mathfrak{u}}\,$ given  by 
$$\,C:= \left (\begin{matrix}  i   &  \ 0  \cr
                  0 & -i  \cr    
\end{matrix} \right )\,, \quad 
H:= \left (\begin{matrix}  0   &  -1  \cr
                  1 & \ 0  \cr    
\end{matrix} \right )\,, \quad W:= \left (\begin{matrix} 
 0   &  i  \cr
                  i & \ 0  \cr   
\end{matrix} \right )$$
and let $\,\hat \varphi:U^{\mathbb{C}} \to U^{\mathbb{C}}\,$ be the Lie group isomorphism associated
to the Lie algebra isomorphism mapping  $\,\{C,H,W\}\,$ into 
$\,\{-C,-H,W\}$. Extend $\,\hat \varphi\,$ to the isomorphism of 
$\,U^{\mathbb{C}} \times {\mathbb{C}} \,$ defined
by $\,(g,z) \to ( \hat \varphi(g),z)$.
Since $\,\hat \varphi(k) = k^{-1}\,$ for all $\,k \in K^{\mathbb{C}}$, one has
$$\,\hat \varphi(gk^{-1}, \chi^m(k)z)=
(\hat \varphi(g)k, \chi^m(k)z)=(\hat \varphi(g)(k^{-1})^{-1}, \chi^{-m}(k^{-1})z)\,.$$
This implies that $\,\hat \varphi\,$ pushes down to a biholomorphism 
$\,\varphi:L^m \to L^{-m}$. Moreover by 
construction $\,\hat \varphi(U)=U$, therefore every
$\,U$-invariant domain of $\,L^m\,$ is mapped onto a 
$\,U$-invariant domain of $\,L^{-m}$.

In order to avoid ambiguity, here we let $\,\Omega_{m,\rho}\,$ denote the $\,U$-equivariant disc bundle $\,\Omega_{\rho}\,$ contained in $\,L^m$. 
If $\,[g,z] \in \Omega_{m,\rho} $, with $\,g=u \exp (ihH) \zeta^{-1}$,
one has
$$\,  \varphi([g,z])= [\varphi(u) \exp(-ihH)(\zeta^{-1})^{-1} ,z]\,,$$
with
$\, |z||\zeta^{-1}|^{-m} \rho(-h) =|z||\zeta|^m \rho(h) <1$.
Thus $\,\varphi(\Omega_{m,\rho}) = \Omega_{-m,\rho}$, implying the
statement. \qed \end{proof}

 %----------------BIO---------------
 
 \bigskip
\begin{remark}
\label{BIO}
Since $\,\hat \varphi(K^{\mathbb{C}})=K^{\mathbb{C}}$, one can consider the  induced biholomorphism
$\,\hat \varphi:Q^2 \to Q^2\,$ and it is easy to check that 
 $\,L^m= \hat \varphi^*(L^{-m})\,$ for all $\,m>0$.
However recall that $\,L^m\,$ and $\,L^{-m}\,$ are not isomorphic as line bundles
over $\,Q^2$.
\end{remark}

\bigskip

If $\,m\not=0$, then  $\,U^{\mathbb{C}}\,$ acts transitively on $\,U^{\mathbb{C}} \times_{\chi^m} {\mathbb{C}}^*$
by $\,g \cdot [g',z] := [gg',z]\,$ and the isotropy at $\,[e,1]\,$ is
the cyclic group $\,\Gamma_m = \{\, \zeta \in K^{\mathbb{C}} \ : \ \zeta^m =1\}$.
Therefore one has a commutative diagram 
$$\begin{matrix}   U^{\mathbb{C}}      &    &    \cr
                            &                                   &               \cr 
              \downarrow &           \searrow \pi            &   \cr
                            &                                   &               \cr 
              U^{\mathbb{C}}/\Gamma_m    &     \cong  & U^{\mathbb{C}} \times_{\chi^m} {\mathbb{C}}^* \,, \cr
\end{matrix} $$
where $\,\pi\,$ is the orbit map  given by $\,\pi(g) =[g,1]$.
It follows that  $\,\widetilde \Omega_{\rho}^*:= \pi^{-1}(\Omega^*_{\rho}) =
 \{ g \in U^{\mathbb{C}} \ : \ |\zeta |^m \rho(h) < 1 \,\}\,$
is a covering of $\,\Omega^*_{\rho}\,$ with $\,m$-sheets.
In fact  it is the universal covering
of $\, \Omega_{\rho}^*$, since  it 
is homeomorphic to $\,U^{\mathbb{C}}$, which is simply connected.
Indeed $\,\widetilde \Omega_{\rho}^*\,$ 
itself can be regarded 
 as a disc bundle over $\,Q^2\,$ and one can apply a
 suitable fiberwise radial dilatation deforming $\,\widetilde \Omega_{\rho}^*\,$
 onto  $\,U^{\mathbb{C}}$.  

For every $\,m \in {\mathbb{Z}}\,$ let $\,\Omega_{max}^*\,$ be the $\,U$-equivariant, punctured disc bundle in $\,L^m\,$ associated 
to $\,\rho_{max}:{\mathbb{R}} \to {\mathbb{R}}^{>0}\,$ defined by $\,\rho_{max}(h):= (\cosh 2h)^{|m|/2}$.

%------------------- MAX --------
\bigskip

\begin{prop}
\label{MAX}
The  $\,U$-equivariant, punctured disc bundle $\,\Omega_{max}^*\,$
is Stein and  its
universal covering  admits
a proper $\,{\mathbb{C}}$-action. In particular $\,\Omega_{max}^*\,$ is not
Kobayashi hyperbolic.
\end{prop}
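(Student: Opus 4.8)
The plan is to work on the universal covering $\,\widetilde\Omega_{max}^*=\{\,g\in U^{\mathbb{C}}:|\zeta|^m\rho_{max}(h)<1\,\}\,$ introduced above and to produce the $\,\mathbb{C}$-action there explicitly. By Lemma~\ref{DUALITY} I may assume $\,m>0$, since the biholomorphism $\,\varphi\,$ preserves both Steinness and Kobayashi hyperbolicity and carries $\,\Omega_{max}^*\,$ in $\,L^m\,$ to $\,\Omega_{max}^*\,$ in $\,L^{-m}\,$ (the case $\,m=0\,$, where $\,L^0\,$ is trivial, is handled by direct product considerations). First I would rewrite the defining inequality in the coordinates $\,(s,t)\,$ of Lemma~\ref{COORDIN}: from $\,\cosh 2h=\sqrt{st}\,$ and $\,|\zeta|=\sqrt[4]{t/s}\,$ one gets
\[
|\zeta|^m\rho_{max}(h)=\Big(\tfrac{t}{s}\Big)^{m/4}(st)^{m/4}=t^{m/2},
\]
so that $\,\widetilde\Omega_{max}^*=\{\,g\in U^{\mathbb{C}}:t<1\,\}$, i.e. the set of matrices whose second column $\,(z_3,z_4)\,$ satisfies $\,|z_3|^2+|z_4|^2<1\,$ (Remark~\ref{PLURI}).

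For the Steinness of $\,\Omega_{max}^*\,$ I would argue on the covering and descend. By Remark~\ref{PLURI} the function $\,\log t\,$ is continuous and plurisubharmonic on the Stein manifold $\,U^{\mathbb{C}}=SL(2,\mathbb{C})$; hence its sublevel set $\,\widetilde\Omega_{max}^*=\{\log t<0\}\,$ is locally Stein and therefore Stein by Docquier--Grauert. Since $\,\Gamma_m\,$ is finite and acts freely with $\,\Omega_{max}^*=\widetilde\Omega_{max}^*/\Gamma_m$, and a finite quotient of a Stein manifold is Stein, it follows that $\,\Omega_{max}^*\,$ is Stein. (Note that $\,\Omega_{max}\,$ itself is Stein directly by Proposition~\ref{STEIN}, as $\,\log t^{m/2}\,$ is plurisubharmonic; removing the zero section is what forces the covering argument.)

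The crucial step is the $\,\mathbb{C}$-action. Let
\[
N=\Big\{\,n(\lambda)=\left(\begin{matrix}1&0\\\lambda&1\end{matrix}\right):\lambda\in\mathbb{C}\,\Big\}\cong\mathbb{C}
\]
be the closed isotropy subgroup in $\,SL(2,\mathbb{C})\,$ of the second standard basis vector. Right translation $\,\lambda\cdot g:=g\,n(\lambda)\,$ fixes the second column of $\,g$, hence preserves $\,t=|z_3|^2+|z_4|^2\,$ and so acts holomorphically and freely on $\,\widetilde\Omega_{max}^*$. As right translation by a closed subgroup of a Lie group is a proper action, and properness is inherited by invariant open subsets, this $\,\mathbb{C}$-action on $\,\widetilde\Omega_{max}^*\,$ is proper.

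Non-hyperbolicity is then immediate: the orbit map $\,\lambda\mapsto g_0\,n(\lambda)\,$ is a nonconstant holomorphic map $\,\mathbb{C}\to\widetilde\Omega_{max}^*$, and composing it with the covering projection $\,\widetilde\Omega_{max}^*\to\Omega_{max}^*\,$ yields a nonconstant entire curve in $\,\Omega_{max}^*$, which forces the Kobayashi pseudodistance to degenerate. The main obstacle I anticipate lies in the third step: one must single out the correct one-parameter subgroup whose right action preserves the level sets of $\,t$, and then verify that the induced action is genuinely proper on the open set $\,\{t<1\}\,$ and not merely on all of $\,SL(2,\mathbb{C})$.
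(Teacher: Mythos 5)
Your proposal is correct and follows essentially the same route as the paper: the identity $\,|\zeta|^m\rho_{max}(h)=t^{m/2}\,$ via Lemma~\ref{COORDIN}, the same unipotent subgroup acting by right translation and preserving $\,t$, and the reduction to $\,m>0\,$ by Lemma~\ref{DUALITY}. The only (harmless) divergence is in the Steinness step, where you argue on the covering via Docquier--Grauert and descend through the finite free $\,\Gamma_m$-quotient, whereas the paper applies Proposition~\ref{STEIN} to $\,\Omega_{max}\,$ directly and then removes the zero section.
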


\smallskip
\begin{proof} For $\,m=0\,$ one has $\,\Omega^*_{max}= Q^2 \times
\Delta^*\,$ and the statement follows by considering
the action on $\,Q^2\,$ of any one parameter
subgroup in $\,U^{\mathbb{C}}$. Next, by Lemma \ref{COORDIN} one has
$\,|\zeta|= \root 4 \of {\frac{t}{s}}\,$ and 
$$\,h= \frac{1}{2}\, {\rm arccosh} \, \sqrt{st} = \frac{1}{2} {\rm arccosh}
\,  e^{\frac{1}{2}(\log s + \log t)}\,.$$ 
Define
$\,\theta:{\mathbb{R}}^{\ge 0} \to {\mathbb{R}}^{\ge 0}\,$ by $\,\theta(\tau):= \log  \rho
(\frac{1}{2} {\rm arccosh} \,  e^{\frac{\tau}{2}})$. Then
$$\log (|\zeta|^m \rho(h)) =m \log |\zeta| + \log  \rho(h) = 
\frac{m}{4} (\log t - \log s) + \theta (\log t + \log s) \,=$$ 
$$ \theta (\log t + \log s) - \frac{m}{4} (\log t + \log s) +\frac{m}{2} \log t.  $$
Assume that $\,m>0\,$ and fix $\,\rho_{max}(h)= (\cosh 2h)^{m/2}$,
which corresponds to  $\,\theta_{max} (\tau)=\frac{m}{4}\tau$.
Then the above equation implies that 
$\,\log |\zeta|^m \rho_{max}(h)=\frac{m}{2} \log t$, which is 
plurisubharmonic by  Remark \ref{PLURI}.
Therefore $\,\Omega_{max}\,$
 is Stein by Prop. \ref{STEIN} and so is the associated
 punctured disc bundle $\,\Omega^*_{max}$. 
 
 Finally note that the function $\,U^{\mathbb{C}} \to {\mathbb{R}}^{>0}\,$ given by 
 $\,g \to t\,$ is invariant
 with respect to the proper $\,{\mathbb{C}}$-action on $\,U^{\mathbb{C}}\,$ defined by
 (cf. Rem. \ref{PLURI})
 $$w\cdot g:= g\left (\begin{matrix}  1 &  0  \cr
                  w  & 1    \cr    
\end{matrix} \right ).$$
 Thus   the universal covering 
 $\,\widetilde \Omega^*_{max}=\{\,g \in  U^{\mathbb{C}} 
 \ : \   \frac{m}{2} \log t < 0 \, \}\,$ is a $\,{\mathbb{C}}$-invariant subdomain 
 of $\,U^{\mathbb{C}}$, proving the statement for $\,m>0$. A similar argument
 (or use Lemma \ref{DUALITY}) applies to the case when $\,m<0$. \qed
\end{proof}

\bigskip
Note that the fiberwise multiplication by $\,\rho(0)\,$ on
$\,U^{\mathbb{C}} \times_{\chi^m} {\mathbb{C}}$, given by $\,[g,z] \to
[g, \rho(0)z]$, 
maps $\,\Omega_\rho\,$ biholomorphically and $\,U$-equivariantly
onto $\,\Omega_{\rho/\rho(0)}$. It follows that one can always normalize 
$\,\Omega_\rho\,$ so that $\,\rho(0)=1$.

%------------------- MAIN --------
\bigskip

\begin{theorem}
\label{MAIN}

\nmedskip
\item{\rm (i)} Every Stein, normalized,
$\,U$-equivariant, disc bundle $\,\Omega_\rho\,$ over $\,Q^2\,$
 is contained in $\,\Omega_{max}$, and
\item{\rm (ii)} if $\,\Omega_\rho\,$ does not coincide with
$\,\Omega_{max}$,  then the associated 
Stein, punctured disc bundle $\,\Omega^*_\rho\,$
is Kobayashi Hyperbolic.
\end{theorem}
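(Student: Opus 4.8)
The plan is to read off both assertions from the one--variable data of $\Omega_\rho$, after the standard reductions. By Lemma~\ref{DUALITY} I may assume $m\ge 0$, and the case $m=0$ is contained in Proposition~\ref{MAX}; so suppose $m>0$. Writing $\tau:=\log s+\log t=\log(st)$ on the quotient $\mathcal P$ and using Lemma~\ref{COORDIN}, the normalized defining exponent becomes $\log\!\big(|\zeta|^m\rho(h)\big)=\eta(\tau)+\tfrac m2\log t$, where $\eta(\tau):=\log\rho(h)-\tfrac m4\tau$ measures the deviation of $\rho$ from $\rho_{max}$ (so $\eta\equiv 0$ exactly for $\Omega_{max}$, and $\eta(0)=0$ by normalization). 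Since the containment $\Omega_\rho\subseteq\Omega_{max}$ is equivalent to $\rho\ge\rho_{max}$, i.e. to $\eta\ge 0$, both parts follow once the sign and the growth of $\eta$ are pinned down.

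For part (i) I would restrict the plurisubharmonic function $\log\!\big(|\zeta|^m\rho(h)\big)$ (Proposition~\ref{STEIN}) to the holomorphic curve $\lambda\mapsto\bigl(\begin{smallmatrix}1&0\\ \lambda&1\end{smallmatrix}\bigr)$ in $U^\C$. Along it $t\equiv 1$ and $s=1+|\lambda|^2$, so by Remark~\ref{PLURI} the restriction equals $\eta\!\big(\log(1+|\lambda|^2)\big)$, which must be subharmonic in $\lambda$. A direct Laplacian computation turns this into the differential inequality $\eta''(\tau)(e^{\tau}-1)+\eta'(\tau)\ge 0$ for $\tau\ge 0$, that is, the function $M(\tau):=\eta'(\tau)\,(1-e^{-\tau})$ is nondecreasing on $(0,\infty)$. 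Because $\rho$ attains its minimum at $0$ (Proposition~\ref{STEIN}(ii)) one checks that $M(0^+)=0$, whence $M\ge 0$, hence $\eta'\ge 0$; as $\eta(0)=0$ this gives $\eta\ge 0$, i.e. $\Omega_\rho\subseteq\Omega_{max}$.

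For part (ii) the same monotonicity is the engine. If $\Omega_\rho\ne\Omega_{max}$ then $\eta\not\equiv 0$, so $M(\tau_0)>0$ for some $\tau_0>0$; monotonicity of $M$ forces $\eta'(\tau)\ge M(\tau_0)>0$ for all $\tau\ge\tau_0$, whence $\eta(\tau)\to+\infty$. I would then pass to the universal covering $\widetilde\Omega_\rho^{\,*}=\{g\in U^\C:|\zeta|^m\rho(h)<1\}$, which is Kobayashi hyperbolic iff $\Omega_\rho^{\,*}$ is. On it $\tfrac m2\log t\le\log\!\big(|\zeta|^m\rho(h)\big)<0$ (using $\eta\ge 0$ from part~(i)), so the second--column map $g\mapsto(z_3,z_4)$ sends $\widetilde\Omega_\rho^{\,*}$ into the punctured ball $\{0<|z_3|^2+|z_4|^2<1\}$, a bounded, hence Kobayashi hyperbolic, base; its fibres are exactly the orbits of the proper $\C$--action of Proposition~\ref{MAX}, and because $\eta\to\infty$ each fibre is now a disc of finite radius instead of a whole line. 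A Liouville argument (the bounded entries $z_3,z_4$ are constant along any entire curve, and then $\eta\to\infty$ confines the curve to a finite disc) shows $\widetilde\Omega_\rho^{\,*}$ carries no nonconstant entire curve.

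The main obstacle is to upgrade this absence of entire curves to genuine Kobayashi hyperbolicity, i.e. to produce a positive lower bound for the Kobayashi--Royden metric in the \emph{fibre} directions, where the projection to the base gives no information. Here I would invoke that $v:=|\zeta|^m\rho(h)=\exp\log\!\big(|\zeta|^m\rho(h)\big)$ is a bounded plurisubharmonic function, $0<v<1$, whose restriction to every fibre is a bounded subharmonic exhaustion increasing from $t^{m/2}$ to $1$; combined with hyperbolicity of the base this should yield, by a Sibony--type estimate, the remaining lower bound and hence hyperbolicity of $\widetilde\Omega_\rho^{\,*}$, and of $\Omega_\rho^{\,*}$. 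The delicate point I expect to require the most care is precisely the strict positivity of the Levi form of $v$ transverse to the base: it degenerates along the locus $\{st=1\}$ exactly when $\eta'(0)=0$ (for instance when $\rho$ osculates $\rho_{max}$ to second order at $h=0$), so at those points the fibre--direction bound cannot rest on $v$ alone and must instead be drawn from the fibration itself, using that $\eta(\tau)\to\infty$ renders every fibre a relatively compact disc inside the total space.
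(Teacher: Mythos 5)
Your part (i) is essentially the paper's own argument. The function you call $\eta$ is the paper's $\delta$, and the curve $\lambda\mapsto\left(\begin{smallmatrix}1&0\\ \lambda&1\end{smallmatrix}\right)$ is exactly the one the paper restricts to (parametrized there as $\lambda=e^{x+iy}$); the conclusion that $\eta$ is nondecreasing with $\eta(0)=0$, and that $\eta\to\infty$ when $\eta\not\equiv 0$, is precisely the Claim in the paper's proof. Two caveats. First, your route through the differential inequality $\eta''(\tau)(e^{\tau}-1)+\eta'(\tau)\ge 0$ presumes $\eta\in C^2$, which is not granted: $\rho$ is a priori only upper semicontinuous and logarithmically convex. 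The paper avoids this by noting that the $i\R$-invariant subharmonic function $x\mapsto\eta(\log(1+e^{2x}))$ is convex and tends to $0$ as $x\to-\infty$, hence is nondecreasing; no second derivatives are needed. This is fixable on your side (one-sided derivatives of convex functions, or mollification), but as written it is a gap. Second, the case $m=0$ is \emph{not} contained in Proposition \ref{MAX}: that proposition only asserts non-hyperbolicity of $\Omega^*_{max}$. Statement (ii) for $m=0$ still requires an argument (the paper's: if $\rho\not\equiv 1$ then $\rho(h)^{-1}\to 0$, so the preimage under $\Omega^*_\rho\to\Delta^*$ of any relatively compact set is relatively compact in a Stein manifold, and Thm.~3.2.15 of \cite{Kob} applies).

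The genuine gap is in part (ii) for $m>0$. After mapping $\widetilde\Omega^*_\rho$ into the punctured ball via $P(g)=(z_3,z_4)$ you only establish the absence of nonconstant entire curves, and you yourself acknowledge that upgrading this to hyperbolicity rests on an unproven ``Sibony-type estimate'' whose hypotheses (strict positivity of the Levi form of $v$ transverse to the base) you expect to \emph{fail} along $\{st=1\}$. That is not a proof, and the proposed repair is not needed. The correct (and much simpler) assembly of the facts you already have is Kobayashi's localization theorem (Thm.~3.2.15 in \cite{Kob}): since the base $\B_1^*(0,0)$ is hyperbolic, it suffices that every point of $P(\widetilde\Omega^*_\rho)$ has a neighborhood with hyperbolic preimage. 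Take $\B_\varepsilon(z_3,z_4)$ relatively compact in $\B_1^*(0,0)$; on $P^{-1}(\B_\varepsilon(z_3,z_4))$ one has $\log t>-C$, hence $\eta(\log s+\log t)<\tfrac m2 C$ from the defining inequality, and since $\eta(\tau)\to\infty$ (your own claim) this forces $\log s+\log t<D$ and therefore $\log s<C+D$. So $s=|z_1|^2+|z_2|^2$ and $t=|z_3|^2+|z_4|^2$ are both bounded on the preimage, which is therefore contained in a product of balls in $\C^4$ and hence Kobayashi hyperbolic. No pointwise lower bound on the Kobayashi--Royden metric in the fibre directions has to be manufactured; the localization theorem does that work for you.
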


\smallskip
\begin{proof} 
First assume $\,m=0\,$ and let  
$\,\Omega_{\rho}=\{\, (gK^{\mathbb{C}},z) \in Q^2 \times {\mathbb{C}}  \ : \ |z|\rho(h)<1 \,\}\,$ be Stein.
Since $\,\rho(0)=1\,$ and $\,\rho\,$ is logarithmically convex by
Prop. \ref{STEIN}, it follows that 
$\,\Omega_{\rho} \subset Q^2 \times \Delta = \Omega_{max}$, proving (i). 
For (ii) consider the associated (Stein) punctured disc bundle $\,\Omega_{\rho}^*$  
and the projection $\,\pi:\Omega^*_\rho
\to \Delta^*\,$ onto the second factor. Note that 
if $\,\rho\,$ is non constant then $\,\rho(h)^{-1} \to 0\,$ as
$\,h \to \infty$. As a consequence for every relatively compact 
domain $\,A\,$ of $\,\Delta^*\,$ the preimage $\,\pi^{-1}(A)\,$ is 
contained in the product  $\,U \exp(IiH)K^{\mathbb{C}} \times A$, for some relatively
compact interval $\,I\,$ in $\,{\mathbb{R}}\,$. In particular $\,\pi^{-1}(A)$, being relatively compact in a Stein manifold,  is Kobayashi hyperbolic and
so is  $\,\Omega^*_\rho\,$ by Thm.~3.2.15 in \cite{Kob}.

If $\,m\not=0\,$ we  prove the inclusion in (i) for the universal coverings 
$\,\widetilde \Omega_\rho\,$ and $\,\widetilde \Omega_{max}$. 
As a consequence of Lemma \ref{DUALITY}
it is enough to consider the case  $\,m > 0$. Recall that 
$$\,\widetilde \Omega_{max}=\{\, g \in U^{\mathbb{C}} \ : \ \frac{m}{2} \log t < 0 \,\}\,.$$
 Note that  
 $$\,\widetilde \Omega_\rho =\{\, g \in U^{\mathbb{C}} \ : \  \delta 
(\log s + \log t) +\frac{m}{2} \log t  < 0 \,\}\,,$$
 where $\,\delta:[0, \infty) \to {\mathbb{R}}\,$ is defined by 
 $$\delta(\tau):= \theta (\tau)-\frac{m}{4}\tau = \log 
 \rho (\frac{1}{2} {\rm arccosh}\, (e^{\tau/2})) - \frac{m}{4}\tau \,.$$
Since $\,\delta(0)=0$, in order to prove
(i)  it is enough to show that $\,\delta\,$ is increasing.
Indeed one has

\nmedskip
{\it Claim.} $\ $The function $\,\delta\,$
is increasing. Moreover, if   $\,\delta \not \equiv 0$
then $\,\delta(\tau) \to \infty\,$ as $\,\tau \to \infty$.

\nmedskip
 {\it Proof.} $\ $ Since $\,\Omega_\rho\,$ is Stein, the $\,U \times K$-invariant 
 function  $\,U^{\mathbb{C}} \to {\mathbb{R}}^{>0}$, given by 
  $\,|\zeta|^m\rho(h)= \delta (\log s + \log t) +\frac{m}{2} \log t$, 
 is plurisubharmonic (cf. Proposition \ref{STEIN}).
 Then, by composing with the 
holomorphic map $\,{\mathbb{C}} \to U^{\mathbb{C}}$, defined by 
$$\,x+iy \to 
\left (\begin{matrix}  1   &  0  \cr
                  e^{x+iy}  & 1    \cr    
\end{matrix} \right )\,,$$
one obtains an
subharmonic, $\,i{\mathbb{R}}$-invariant function, namely
$\,x+iy \to \delta(\log (1+e^{2x}))$.
It follows that  the function $\,x \to \delta(\log (1+e^{2x}))\,$ is convex. 
Then it is necessarily 
increasing, since it converges to $\,0\,$  as $\,x \to -\infty$.
Furthermore  $\,x \to  \log (1+e^{2x})\,$
is strictly increasing, therefore $\,\delta\,$ is also increasing, as claimed.
Finally note that if $\,\delta \not \equiv 0$, then $\,x \to \delta(\log (1+e^{2x}))\,$
is non constant, convex and increasing. Then 
necessarily $\,\delta(\tau) \to \infty\,$ as $\,\tau \to \infty$, concluding the
proof of the claim.

\nmedskip

For (ii) note that by Theorem~3.2.8 in \cite{Kob}  the Stein, punctured
disc bundle $\,\Omega^*_\rho\,$ is Kobayashi hyperbolic if and only if
its covering $\,\widetilde \Omega^*_\rho \subset U^{\mathbb{C}}\,$ is hyperbolic.
Assume as above that $\,m > 0 \,$ and consider the projection
$$P: \widetilde \Omega^*_\rho \to {\mathbb{C}}^2 \setminus \{(0,0)\}\,,
\quad \quad  \left (\begin{matrix}  z_1 &  z_3  \cr
                  z_2  & z_4    \cr    
\end{matrix} \right )  \to (z_3,z_4) .$$
Since $\,\delta \ge0\,$ and $\, \delta 
(\log s + \log t) +\frac{m}{2} \log t <0\, $ on $\,  \widetilde \Omega^*_\rho \,$ it follows that $\,t=|z_3|^2 +|z_4|^2<1\,$
and consequently $\,P( \widetilde \Omega^*_\rho)$, being contained in the punctured unit ball $\,\mathbb{B}^*_1(0,0)\,$ of $\,{\mathbb{C}}^2$,
 is Kobayashi hyperbolic.
Then, by Thm.~3.2.15 in \cite{Kob}, in order to show that  $\,  \widetilde
\Omega^*_\rho \,$
is Kobayashi hyperbolic it is sufficient to show that for every fixed $\,(z_3,z_4)\,$
in  $\,P( \widetilde \Omega^*_\rho)\,$ there exists $\,\varepsilon\,$ small enough  such 
that $\,P^{-1}(\mathbb{B}_\varepsilon (z_3,z_4))\,$ is Kobayashi hyperbolic.
Here $\,\mathbb{B}_\varepsilon (z_3,z_4)\,$ denotes the ball centered in $\, (z_3,z_4)\,$
of radius $\, \varepsilon$.
Choose $\,\varepsilon\,$ such that $\,\mathbb{B}_\varepsilon (z_3,z_4)\,$ is relatively compact
in  $\,\mathbb{B}^*_1 (0,0)$. 
Then there exists a real, positive constant $\,C\,$ such that
$\, -C < \log t\,$ and consequently $\, \delta( \log s + \log t)  < \frac{m}{2}C\,$
on $\,P^{-1}(\mathbb{B}_\varepsilon (z_3,z_4))$.
Since by assumption
$\,\rho \not \equiv \rho_{max}$, i.e. $\,\delta \not \equiv 0$, the above
claim implies that $\,\delta(\tau) \to \infty\,$ as $\,\tau \to \infty$.
It follows that $\,\log s + \log t < D\,$ for some real constant $\,D$. 
Hence $\,\log s < D+C \,$ and consequently $\,s= |z_1|^2 +|z_2|^2 \,$
is bounded.
This implies that  $\,P^{-1}(\mathbb{B}_\varepsilon (z_3,z_4))\,$ is contained in the product
of two balls in $\,{\mathbb{C}}^4$, therefore it is Kobayashi hyperbolic. 
\qed
 \end{proof}
 
  %----------------hyperconvex---------------
 
\bigskip
\begin{remark}
\label{HYPERCONVEX}
Note that the Stein, $\,U$-equivariant, punctured  disc bundles 
$\,\Omega_\rho^*\,$ are not hyperconvex,
in the sense of \cite{Sth}. Assume by contradiction that
there exists a bounded plurisubharmonic exhaustion $\,\varphi\,$
defined on $\, \Omega_\rho^*$. 
Since every fiber $\,F\,$ is closed in $\,\Omega_\rho^*$,
the restriction $\,\varphi |_F\,$ of $\,\varphi\,$ to $\,F\,$
is a subharmonic exhaustion. In particular 
$\,\varphi |_F\,$ is not constant. However $\,F\,$ is biholomorphic
to a punctured disc and  $\,\varphi |_F\,$ is bounded,
therefore $\,\varphi |_F\,$ extends to a bounded, subharmonic
function on the whole disc with a maximum at the origin. 
Hence $\,\varphi |_F\,$ is constant, giving a contradiction.
\end{remark}

\bigskip
\noindent
For later use we note the following fact.

 %------------------------------SEZIONE------------
 
 \bigskip
\begin{lemma}
\label{SEZIONE} Let $\,\Omega_\rho \subset L^m\,$ be a Stein, $\,U$-equivariant 
disc bundle over $\,Q^2$. 
If $\,m\not =0\,$ then every automorphism of $\,\Omega_\rho\,$ leaves 
the zero section invariant.
\end{lemma}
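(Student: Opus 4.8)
The plan is to characterize the zero section $\,Z:=\{[g,0] : g\in U^{\mathbb{C}}\}\cong Q^2\,$ by a biholomorphically invariant condition on entire curves, and then to observe that any automorphism must preserve it. For $\,p\in\Omega_\rho\,$ let $\,V_p\subseteq T_p\Omega_\rho\,$ denote the complex-linear span of the velocities $\,f'(0)\,$ of all holomorphic maps $\,f:{\mathbb{C}}\to\Omega_\rho\,$ with $\,f(0)=p\,$. Since any automorphism $\,\Phi\,$ of $\,\Omega_\rho\,$ carries entire curves to entire curves, one has $\,(d\Phi)_p V_p=V_{\Phi(p)}\,$, so $\,\Phi\,$ preserves every level set of the function $\,p\mapsto\dim_{\mathbb{C}}V_p\,$. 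I will show that this function equals $\,2\,$ precisely on $\,Z\,$ and is at most $\,1\,$ off $\,Z\,$, which yields $\,\Phi(Z)=Z\,$.

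The key preliminary step is that \emph{every entire curve in $\,\Omega_\rho\,$ is contained in a single level set of the norm function}. Denote by $\,N:\Omega_\rho\to[0,1)\,$ the $\,U$-invariant norm $\,N([g,z])=|z|\,|\zeta|^m\rho(h)\,$, so that $\,Z=\{N=0\}\,$ and $\,\Omega_\rho^*=\{N>0\}\,$. Pulling back to the $\,{\mathbb{C}}^*$-bundle $\,O_\rho\,$ of the proof of Prop.~\ref{STEIN}, one sees that $\,\log N\,$ is plurisubharmonic on $\,\Omega_\rho^*\,$, being there the sum of $\,\log|z|\,$ and the plurisubharmonic function $\,\log(|\zeta|^m\rho(h))\,$. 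Given an entire curve $\,f\,$ not contained in $\,Z\,$, the set $\,f^{-1}(Z)\,$ is discrete, and $\,\log(N\circ f)\,$ is subharmonic and bounded above by $\,0\,$ on its complement; it extends subharmonically across $\,f^{-1}(Z)\,$ and, being a subharmonic function on $\,{\mathbb{C}}\,$ bounded above, is constant. Thus $\,N\circ f\,$ is constant. In particular every entire curve through a point of $\,Z\,$ stays in $\,Z\,$, so $\,V_p\subseteq T_pZ\,$ for $\,p\in Z\,$; and through each point of $\,Z\cong Q^2\,$ pass the two distinct ruling lines of the quadric, whose directions already span $\,T_pZ\,$. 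Hence $\,\dim_{\mathbb{C}}V_p=2\,$ on $\,Z\,$.

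It remains to show $\,\dim_{\mathbb{C}}V_p\le1\,$ for $\,p\notin Z\,$, and here the two regimes of Theorem~\ref{MAIN} must be treated separately; the case $\,\Omega_\rho=\Omega_{max}\,$ is the main obstacle. If $\,\Omega_\rho\neq\Omega_{max}\,$, then $\,\Omega_\rho^*\,$ is Kobayashi hyperbolic, so it carries no nonconstant entire curve and $\,V_p=0\,$ for $\,p\notin Z\,$. If $\,\Omega_\rho=\Omega_{max}\,$, then $\,\Omega_{max}^*\,$ is not hyperbolic (Prop.~\ref{MAX}), and I will instead use the explicit structure of its entire curves. By the preliminary step any entire curve through $\,p\in\Omega_{max}^*\,$ remains in the level set $\,\{N=N(p)\}\subset\Omega_{max}^*\,$ and lifts to the universal covering $\,\widetilde\Omega_{max}^*\subset U^{\mathbb{C}}\,$. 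There $\,N\,$ corresponds to a power of $\,t=|z_3|^2+|z_4|^2\,$ (Remark~\ref{PLURI}), so the second column $\,(z_3,z_4)\,$ has constant squared norm along the lift; being holomorphic, it is therefore constant. Consequently the lifted curve lies in a single orbit of the proper $\,{\mathbb{C}}$-action of Prop.~\ref{MAX}, which is an affine line in $\,U^{\mathbb{C}}\,$; all such velocities at $\,p\,$ lie in one complex line, so $\,\dim_{\mathbb{C}}V_p\le1\,$.

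Putting these together, $\,Z=\{p:\dim_{\mathbb{C}}V_p=2\}\,$ is a biholomorphic invariant, whence $\,\Phi(Z)=Z\,$ for every automorphism $\,\Phi\,$. The only genuinely delicate point is the verification, in the maximal case, that the entire curves of $\,\Omega_{max}^*\,$ are exactly the orbits of the $\,{\mathbb{C}}$-action and hence contribute a single velocity direction at each point; the hyperbolic case and the computation on $\,Z\,$ are routine once the level-set confinement of entire curves has been established.
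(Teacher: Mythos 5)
Your proof is correct, and its overall strategy --- characterizing the zero section $\,Z\,$ as the locus where the velocities of entire curves span a two\--dimensional space, and noting that this locus is a biholomorphic invariant --- is exactly the paper's. Where you diverge is in the confinement step for $\,p \notin Z$. The paper gives a single uniform argument: it composes the projection $\,P([g,1])=[z_3,z_4] \in \mathbb{B}_1^*(0,0)/\Gamma_m\,$ with the injective holomorphic map $\,[z,w] \mapsto (z^m, z^{m-1}w, w^m)\,$ into $\,\Delta^3\,$ and applies the ordinary Liouville theorem to conclude that every entire curve through $\,p\,$ lies in the one\--dimensional fiber $\,P^{-1}(P(p))$; this works for every Stein $\,\Omega_\rho\,$ at once, using only the inclusion $\,\Omega_\rho \subset \Omega_{max}\,$ (i.e.\ the bound $\,t<1$) from part (i) of Theorem~\ref{MAIN}. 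You instead first confine entire curves to level sets of the norm $\,N\,$ by a subharmonic Liouville argument, and then split into two regimes, importing the hyperbolicity statement of Theorem~\ref{MAIN}~(ii) when $\,\Omega_\rho \neq \Omega_{max}\,$ and doing an explicit computation with the $\,{\mathbb{C}}$-action when $\,\Omega_\rho = \Omega_{max}$. Your route costs a case distinction and a heavier reliance on Theorem~\ref{MAIN}, but the level\--set confinement of entire curves is a clean observation the paper does not make explicit, and your treatment of the maximal case identifies the entire curves concretely. Two small points to tidy up: the case split should be made after normalizing $\,\rho(0)=1\,$ (harmless, since normalization is a fiberwise rescaling commuting with everything), and the maximal case as written assumes $\,m>0\,$ (so that $\,N\,$ pulls back to a power of $\,t$); for $\,m<0\,$ one argues with $\,s\,$ and the opposite unipotent action, or simply invokes Lemma~\ref{DUALITY}.
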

 
\begin{proof} 
Note that if $\,p\,$ belongs to the zero section $\,Z \cong U^{\mathbb{C}}/K^{\mathbb{C}}$,
then for every $\,X\,$ in the 2-dimensional tangent space
$\,T_pZ \cong \mathfrak p^{\mathbb{C}}\,$ there exists an entire curve through $\,p\,$
and tangent to $\,X$. Namely, $\, \exp ({\mathbb{C}} X) \cdot p$.
Then it is enough to show that for $\,p \in \Omega^*_\rho\,$ the subspace
of the elements of $\,T_p\Omega_\rho\,$ with this property is lower dimensional.

For this consider the free action of the cyclic group 
 $\,\Gamma_m \subset K^{\mathbb{C}} \cong {\mathbb{C}}^*\,$ on the punctured unit ball $\,\mathbb{B}_1^*(0,0)\,$
 in $\,{\mathbb{C}}^2\,$  given by 
$\,\gamma \cdot (z,w):=(\gamma z, \gamma w)$. Let  
$\,P:\Omega_\rho^* \to \mathbb{B}_1^*(0,0)/\Gamma_m\,$ be the  projection defined by
(cf. the proof of Thm. \ref{MAIN})
$$\left [ \left (\begin{matrix}  z_1 &  z_3  \cr
                  z_2  & z_4    \cr    
\end{matrix} \right ),1 \right ]  \to [z_3,z_4] $$
and let  
$\,\iota: \mathbb{B}_1^*(0,0)/\Gamma_m \to \Delta^3\,$ be the 
injective holomorphic map
defined by $\,[z,w] \to (z^m, z^{m-1}w, w^m)$.

For an  entire curve $\,f:{\mathbb{C}} \to \Omega_\rho\,$ through $\,p \in \Omega^*_\rho\,$
the inverse image  $\,f^{-1}(Z)\,$ is a discrete set. Moreover
 the composition
$\,\iota \circ P \circ f|_{{\mathbb{C}} \setminus f^{-1}(Z)}:{\mathbb{C}} \setminus f^{-1}(Z)\to \Delta^3\,$ 
defines a bounded holomorphic map. Thus it extends to a bounded
holomorphic function on $\,{\mathbb{C}}\,$ which, by Liouville's theorem
is constant. It follows that $\,f({\mathbb{C}})\,$ is contained in the one
dimensional fiber $\,P^{-1}(P(p))\,$ of $\,P$, which proves the
statement. \qed
\end{proof}

%-------------------------------------------------------------------
%-----------------A characterization ------------------
%-------------------------------------------------------------------

\bigskip
\section{A characterization}

\bigskip
 
A recent classification of holomorphic actions of classical 
simple, real Lie groups
by Huckleberry and Isaev
applies to show that      
the bounded symmetric domain  $\,SO(3,2)/(SO(3) \times SO(2))\,$
is characterized among Stein manifolds by its complex dimension
and by its automorphism group (see$\,$Thm.$\,$8.1$\,$in$\,$\cite{HuIs}).
As an application of Theorem \ref{MAIN}  we present a
different proof of this fact. Here we follow
the strategy pointed out  in \cite{GIL}, where  higher
dimensional bounded symmetric domains of type IV were considered.
We need a preparatory lemma. 
For notations and definitions we refer to \cite{GIL}.

 %------------------------------OMOGENEO------------
 
 \bigskip
\begin{lemma}
\label{OMOGENEO} {\rm (cf.~Prop.~4.7~in~\cite{GIL})}
Let $\,X\,$ be a 3-dimensional Stein manifold such that 
$\, Aut(X) \,$ is isomorphic to $\,SO(3,2)$.
Assume that $\,X\,$ contains a minimal $\,SO(3) \times SO(2)$-orbit 
of dimension 3 which is $\,SO(3)$-homogeneous. Then $\,X$ is biholomorphic to
a $\,U \times K$-invariant domain
in $\,U^{\mathbb{C}}/\Gamma_m$.
\end{lemma}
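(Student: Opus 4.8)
The plan is to exhibit an explicit biholomorphism between $X$ and a $\,U \times K$-invariant domain in $\,U^{\mathbb{C}}/\Gamma_m\,$ by exploiting the group-theoretic structure supplied by the hypotheses. The key observation is that the identification $\,\Aut(X) \cong SO(3,2)\,$ should, after passing to the relevant covering or adjoint group, produce a holomorphic action of $\,U^{\mathbb{C}}=SL(2,{\mathbb{C}})\,$ on $\,X$. Here one uses the exceptional isogeny between $\,SO(3,2)\,$ and $\,Sp(4,{\mathbb{R}})\,$, together with the fact that the maximal compact subgroup $\,SO(3) \times SO(2)\,$ has a factor $\,SO(3)\,$ that is covered by $\,U=SU(2)$. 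Thus the first step is to produce from the abstract isomorphism $\,\Aut(X) \cong SO(3,2)\,$ a genuine holomorphic $\,U^{\mathbb{C}}$-action on $\,X$, together with the compatible $\,U$-action coming from the compact factor.

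\textbf{Reduction to a homogeneous model.} Once the $\,U^{\mathbb{C}}$-action is in place, I would analyze the assumed minimal orbit. By hypothesis $\,X\,$ contains an $\,SO(3) \times SO(2)$-orbit $\,M\,$ of dimension 3 which is $\,SO(3)$-homogeneous; since $\,U\,$ covers $\,SO(3)\,$ and $\,M\,$ is 3-dimensional and $\,SO(3)$-homogeneous, $\,M\,$ must be $\,U$-equivariantly diffeomorphic to a quotient $\,U/\Gamma\,$ for a finite central subgroup $\,\Gamma\,$, equivalently to a lens-type space. The next step is to identify $\,M\,$ with an orbit of the transitive $\,U^{\mathbb{C}}$-action on one of the manifolds $\,U^{\mathbb{C}}/\Gamma_m\,$ described in Section 4. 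Recall from the excerpt that for $\,m \neq 0\,$ the group $\,U^{\mathbb{C}}\,$ acts transitively on $\,U^{\mathbb{C}} \times_{\chi^m} {\mathbb{C}}^* \cong U^{\mathbb{C}}/\Gamma_m\,$ with isotropy the cyclic group $\,\Gamma_m=\{\zeta \in K^{\mathbb{C}} : \zeta^m=1\}$. Matching the isotropy of the $\,U$-action on $\,M\,$ with $\,\Gamma_m\,$ pins down the integer $\,m\,$ and produces a $\,U$-equivariant identification of $\,M\,$ with the ``core" orbit inside $\,U^{\mathbb{C}}/\Gamma_m$.

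\textbf{Extending to a neighborhood and globalizing.} Having identified the minimal orbit, I would extend the identification to a $\,U^{\mathbb{C}}$-equivariant biholomorphism on a neighborhood, using that both $\,X\,$ and $\,U^{\mathbb{C}}/\Gamma_m\,$ are complex manifolds on which $\,U^{\mathbb{C}}\,$ acts with $\,M\,$ as a totally real (or at least distinguished) orbit; the holomorphic slice theorem for the $\,U^{\mathbb{C}}$-action, together with the $\,U$-equivariance, forces the local models to agree. The map so constructed is $\,U^{\mathbb{C}}$-equivariant, hence its image is a $\,U^{\mathbb{C}}$-invariant, in particular $\,U \times K$-invariant, subdomain of $\,U^{\mathbb{C}}/\Gamma_m$; Steinness of $\,X\,$ guarantees the target is a well-defined invariant domain rather than a branched or incomplete model, and a monodromy/simple-connectedness argument (as in the covering analysis of Section 4, where $\,\widetilde\Omega^*_\rho \cong U^{\mathbb{C}}\,$ is simply connected) lets one globalize the local biholomorphism to all of $\,X$. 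I expect \emph{the main obstacle} to be this passage from the abstract automorphism-group isomorphism to a concrete, holomorphic $\,U^{\mathbb{C}}$-action on $\,X\,$: verifying that the action is holomorphic (not merely smooth) and that the compact factor integrates to the expected $\,U$-action with the correct isotropy requires care, and is precisely the point where one must invoke the structure theory from \cite{GIL} and the rigidity of the $\,SO(3,2)$-action. The remaining steps, once this action is available, are comparatively formal consequences of equivariance and the slice theorem.
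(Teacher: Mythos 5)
Your overall plan has a genuine structural flaw: you aim to produce a holomorphic $\,U^{\mathbb{C}}=SL(2,{\mathbb{C}})$-action on $\,X\,$ itself and then argue that the image of $\,X\,$ is $\,U^{\mathbb{C}}$-invariant. This cannot work: $\,U^{\mathbb{C}}\,$ acts transitively on $\,U^{\mathbb{C}}/\Gamma_m$, so a proper subdomain is never $\,U^{\mathbb{C}}$-invariant, and indeed the lemma only asserts $\,U\times K$-invariance. What actually acts holomorphically on $\,X\,$ is the maximal compact subgroup $\,SO(3)\times SO(2)\,$ of $\,Aut(X)$; the mechanism the paper relies on (quoted from the beginning of Sect.~4.1 of \cite{GIL}) is that the Stein manifold $\,X\,$ with this compact group action embeds equivariantly, via the analytic continuation principle, as an invariant domain in the complexified orbit $\,(SO(3,{\mathbb{C}})\times SO(2,{\mathbb{C}}))/H^{\mathbb{C}}\,$ of the minimal orbit $\,M=(SO(3,{\mathbb{R}})\times SO(2,{\mathbb{R}}))/H$. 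No $\,SL(2,{\mathbb{C}})$-action on $\,X\,$ is ever constructed, and the exceptional isogeny with $\,Sp(4,{\mathbb{R}})\,$ plays no role.

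The second gap is the identification of this complexified orbit with $\,U^{\mathbb{C}}/\Gamma_m$, which you assert rather than prove. You claim $\,M\cong U/\Gamma\,$ with $\,\Gamma\,$ a finite \emph{central} subgroup; this is false as stated (the center of $\,SU(2)\,$ is $\,\{\pm I\}$), and a priori $\,F:=H\cap SO(3)\,$ could be any finite subgroup of $\,SO(3,{\mathbb{R}})\,$ (dihedral, tetrahedral, etc.). The heart of the paper's argument is to show that $\,F\,$ is cyclic and lies inside the circle $\,\varphi(SO(2,{\mathbb{R}}))\subset SO(3,{\mathbb{R}})\,$ determined by the identity component $\,H^e$: since $\,F\,$ normalizes $\,H^e\,$ it lies in the normalizer of that circle, a copy of $\,O(2,{\mathbb{R}})$, and the elements of the reflection coset are excluded by combining the freeness of the $\,SO(2,{\mathbb{R}})$-action ((ii) of Lemma~4.6 in \cite{GIL}) with the transitivity of $\,SO(3,{\mathbb{R}})\,$ on $\,M$. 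Only after this does $\,F\,$ lift under the covering $\,U\to SO(3,{\mathbb{R}})\,$ to a cyclic subgroup of $\,K$, and only then does the twisted action of the $\,SO(2,{\mathbb{R}})$-factor through the character $\,\psi\,$ yield $\,(SO(3,{\mathbb{C}})\times SO(2,{\mathbb{C}}))/H^{\mathbb{C}}\cong SO(3,{\mathbb{C}})/F\cong U^{\mathbb{C}}/\Pi^{-1}(F)$. Without these two ingredients the proposal does not amount to a proof.
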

 
\begin{proof} 
Let $\,M =(SO(3,{\mathbb{R}}) \times SO(2,{\mathbb{R}}))/H\,$ be the minimal 3-dimensional 
orbit. Then the connected component $\,H^e\,$ of the
isotropy subgroup  $\,H\,$ at $\,e\,$ is 1-dimensional and there exists an isomorphism
$\,SO(2,{\mathbb{R}}) \to H^e$, say $\,t \to (\varphi (t),\psi (t))$.
By   (ii) of Lemma~4.6 in \cite{GIL}
the $\,SO(2,{\mathbb{R}})$-action on $\,M\,$ is  free, therefore the homomorphism
$\,\varphi\,$ is injective.
Up to Lie group isomorphism we may assume that 
$\,\varphi (SO(2,{\mathbb{R}}))\,$ is the one parameter subgroup of $\,SO(3,{\mathbb{R}})\,$ generated
by the element
$$\,\left (\begin{matrix}  0 &  \ 0 & \ \,0  \cr
                   0 & \  0 & -1  \cr
                    0 &\   1 & \ \, 0  \cr 
\end{matrix} \right )  \,$$
belonging to the Lie algebra of $\,SO(3,{\mathbb{R}})$.
By assumption $\,M=SO(3,{\mathbb{R}})/F$, where $\,F:=H \cap SO(3)\,$ is finite.
Since
$\,F\,$ is a subgroup of $\,H$, it
normalizes $\,H^e$. As a consequence $\,F\,$ is contained in the 
normalizer of $\,\varphi (SO(2,{\mathbb{R}}))$, which is given by  
$\,\varphi (SO(2,{\mathbb{R}}))\cup \gamma \varphi (SO(2,{\mathbb{R}})) \cong O(2,{\mathbb{R}})$,
where $\,\gamma:= diag(-1,-1,1)$. However, if $\,F\,$ contains an
element of the form $\,\gamma \varphi(t')$, then for all $\,t \in SO(2,{\mathbb{R}})\,$
one has $\,(\gamma \varphi(t')\varphi (t) \varphi(t')^{-1}
\gamma^{-1},\,\psi (t)) = (\varphi (t)^{-1},\psi (t)) \in H\,$ and consequently
$\,(\varphi (t)^{-1},\psi (t))(\varphi (t),\psi (t))=(e,\psi (t)^2) \in H$.
Since the $\,SO(2,{\mathbb{R}})$-action on $\,M\,$ is  free
((ii)$\,$of$\,$Lemma$\,$4.6$\,$in$\,$\cite{GIL}), this implies that the homomorphism $\,\psi\,$ is trivial
and $\,M=SO(3,{\mathbb{R}})/ O(2,{\mathbb{R}}) \times SO(2,{\mathbb{R}})$, contradicting the transitivity
of the $\,SO(3,{\mathbb{R}})$-action.
Hence $\,F\,$ is  a cyclic subgroup
of $\,\varphi (SO(2,{\mathbb{R}}))$. 
Consider the commutative diagram
$$\begin{matrix}   SO(3,{\mathbb{R}})      &    &    \cr
                            &                                   &               \cr 
              \downarrow &           \searrow \Psi         &   \cr
                            &                                   &               \cr 
             SO(3,{\mathbb{R}})/F   &     \cong  & (SO(3,{\mathbb{R}}) \times SO(2,{\mathbb{R}}))/H = M \,, \cr
\end{matrix} $$
where the surjective orbit map 
$\,\Psi\,$ is defined by $\,\Psi(g)=[g,e]$.
Let $\,SO(3,{\mathbb{R}}) \times SO(2,{\mathbb{R}})\,$ act on $\,SO(3,{\mathbb{R}})\,$
 by $\,(g',t)\cdot g:=
g'g \varphi(t)^{-1}\,$ and naturally on $\,M\,$ (i.e., by  left
$\,SO(3,{\mathbb{R}}) \times SO(2,{\mathbb{R}})$-action).
One has 
$$\,\Psi(g'g \varphi(t)^{-1})=[g'g \varphi(t)^{-1},\psi (t) \psi (t)^{-1}]=[g'g,\ \psi (t)]=(g',\psi (t))
\cdot \Psi(g)\,.$$
Now recall that $\,X\,$ is biholomorphic to an
$\,SO(3,{\mathbb{R}}) \times SO(2,{\mathbb{R}})$-invariant domain 
in the complexified orbit
$\,(SO(3,{\mathbb{C}}) \times SO(2,{\mathbb{C}}))/H^{\mathbb{C}}\,$
(cf. the beginning of Sect.~4.1 in \cite{GIL}) and extend the isomorphism in the above 
diagram to $\,SO(3,{\mathbb{C}})/F \to (SO(3,{\mathbb{C}}) \times SO(2,{\mathbb{C}}))/H^{\mathbb{C}}$.
Then,  the analytic continuation principle
and the above equivariance relation
 imply that the manifold
$\,X\,$ is biholomorphic to  an 
$\,SO(3,{\mathbb{R}}) \times SO(2,{\mathbb{R}})$-invariant domain
in $\,SO(3,{\mathbb{C}})/F$.

Finally let $\,\Pi:U^{\mathbb{C}} \to SO(3,{\mathbb{C}})\,$ be a universal covering of 
$\,SO(3,{\mathbb{C}})\,$ which maps  $\,U\,$  onto \,$ SO(3,{\mathbb{R}})\,$
and $\,K\,$ onto $\,H^e$. Then the finite subgroup
 $\,\Pi^{-1}(F)\,$ of $\,K\,$ is cyclic
  and $\, SO(3,{\mathbb{C}})/F\,$ is equivariantly biholomorphic to
  $\,U^{\mathbb{C}}/\Pi^{-1}(F)$, implying the statement. \qed
\end{proof}

 %------------------------------APPLICATION------------ 
 
\bigskip
\begin{theorem}
\label{APPLICATION}
Let $\,X\,$ be a 3-dimensional Stein manifold such that 
$\, Aut(X) \,$ is isomorphic to $\,SO(3,2)$.
Then $\,X\,$ is biholomorphic to
 the bounded symmetric domain $\,SO(3,2)/(SO(3) \times SO(2))$.
\end{theorem}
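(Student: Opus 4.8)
The plan is to show, via the orbit structure of a maximal compact subgroup, that $X$ is $SO(3,2)$-homogeneous, and hence biholomorphic to $SO(3,2)/(SO(3)\times SO(2))$: indeed a homogeneous space $SO(3,2)/H$ with $H$ maximal compact carries a unique invariant complex structure, that of the Hermitian symmetric domain. The whole problem thus reduces to the orbit structure of a maximal compact subgroup $SO(3)\times SO(2)$ of $\mathrm{Aut}(X)\cong SO(3,2)$ on the Stein manifold $X$, which I would analyse following \cite{GIL}. The first task is to determine the minimal orbit $M$ of this compact group: using the description of the isotropy recalled in the proof of Lemma~\ref{OMOGENEO} (cf.\ Lemma~4.6 in \cite{GIL}), I expect to show that $M$ is either a fixed point or a $3$-dimensional, $SO(3)$-homogeneous orbit, all intermediate possibilities being ruled out on dimensional grounds. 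Throughout I would use that $\dim_{\mathbb{R}}X=6$, $\dim SO(3,2)=10$ and $\dim(SO(3)\times SO(2))=4$.

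Suppose first that $SO(3)\times SO(2)$ fixes a point $o\in X$. The $\mathrm{Aut}(X)$-isotropy at $o$ then contains this maximal compact subgroup and, since $SO(3,2)$ has no faithful representation on the $3$-dimensional space $T_oX$, it is a proper subgroup; a maximality argument identifies it with $SO(3)\times SO(2)$, so the orbit through $o$ has real dimension $10-4=6$ and is open. A completeness argument as in \cite{GIL} then shows that this open orbit exhausts the connected manifold $X$, whence $X\cong SO(3,2)/(SO(3)\times SO(2))$ is the desired bounded symmetric domain.

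The heart of the matter is to exclude the remaining case, where $M$ is $3$-dimensional and $SO(3)$-homogeneous. Here Lemma~\ref{OMOGENEO} realizes $X$ as a $U\times K$-invariant domain in $U^{\mathbb{C}}/\Gamma_m$; as $\dim_{\mathbb{C}}X=3$ one has $m\neq0$, and $U\times K$-invariance together with Steinness presents $X$ as a Stein, $U$-equivariant punctured disc bundle $\Omega_\rho^*$, with $\rho$ logarithmically convex and normalized by $\rho(0)=1$ (Proposition~\ref{STEIN}). If $\rho=\rho_{max}$, Proposition~\ref{MAX} shows that the universal covering of $\Omega_{max}^*$ carries a proper $\mathbb{C}$-action, and one checks that $\mathrm{Aut}(\Omega_{max}^*)\not\cong SO(3,2)$. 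If $\rho\neq\rho_{max}$, Theorem~\ref{MAIN}(ii) makes $\Omega_\rho^*$ Kobayashi hyperbolic, so $\mathrm{Aut}(X)\cong SO(3,2)$ acts properly (cf.\ \cite{Kob}); then every isotropy is compact of dimension at most $4$, every orbit has dimension at least $6=\dim_{\mathbb{R}}X$, and $X$ is homogeneous by connectedness. But a homogeneous model is contractible, whereas a punctured disc bundle is not --- equivalently, by Lemma~\ref{SEZIONE} every automorphism of $\Omega_\rho$ fixes the zero section, so $\mathrm{Aut}(\Omega_\rho^*)$ cannot act transitively. Either subcase is contradictory, so this case does not arise and $X\cong SO(3,2)/(SO(3)\times SO(2))$.

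The main obstacle I anticipate is the orbit analysis of the first paragraph --- proving that the minimal $SO(3)\times SO(2)$-orbit is forced to be a fixed point or a $3$-dimensional $SO(3)$-homogeneous orbit and nothing else --- which rests on the detailed description of the isotropy and of the complexified orbit borrowed from \cite{GIL}. A secondary delicate point is closing up the open orbit in the fixed-point case. Once these are in place, Lemma~\ref{OMOGENEO}, Theorem~\ref{MAIN}, Proposition~\ref{MAX}, and the dimension count assemble into the result.
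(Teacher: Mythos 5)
Your overall strategy is the paper's: split on whether the maximal compact subgroup $\,SO(3)\times SO(2)\,$ has a fixed point, settle the fixed-point case by Prop.~3.1 of \cite{GIL}, and otherwise use Lemma \ref{OMOGENEO} and Theorem \ref{MAIN} to reach a contradiction; your contractibility argument in the hyperbolic subcase (proper action, compact isotropy of dimension at most $4$, hence all orbits open and $X$ homogeneous and contractible, against the nontrivial topology of a punctured disc bundle) is a legitimate variant of the paper's appeal to Prop.~3.2 in \cite{GIL}. However, the decisive case $\,\Omega_\rho^*=\Omega_{max}^*\,$ is where the real work lies, and ``one checks that $\,\mathrm{Aut}(\Omega_{max}^*)\not\cong SO(3,2)$'' is not an argument. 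Proposition \ref{MAX} only tells you that $\,\Omega_{max}^*\,$ is \emph{not} hyperbolic, so none of your properness or contractibility machinery applies there, and nothing a priori prevents $\,SO(3,2)\,$ from acting on a non-hyperbolic Stein manifold. The paper's proof of this case is substantive: using the projection $\,P:\Omega_{max}^*\to \mathbb{B}_1^*(0,0)/\Gamma_m\,$ from the proof of Lemma \ref{SEZIONE}, whose fibers are copies of $\,{\mathbb{C}}$, hyperbolicity of the base forces every automorphism to map fibers to fibers, so the $\,SO(3,2)$-action descends to a proper action on a $4$-real-dimensional manifold; properness makes every isotropy compact of dimension at most $4$, so every orbit would have real dimension at least $6$, which is impossible. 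Without this (or an equivalent) argument the central case of the theorem is open.

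A second gap is the reduction itself. You assert that the minimal $\,SO(3)\times SO(2)$-orbit is either a fixed point or $3$-dimensional and $\,SO(3)$-homogeneous ``on dimensional grounds,'' and that $\,X\,$ is then a punctured disc bundle with $\,m\neq 0$. In fact the paper needs Prop.~4.8 and 4.10 of \cite{GIL} precisely to handle the remaining orbit types, and these produce additional cases you never address: the base may be $\,Q^2/{\mathbb{Z}}_2\,$ rather than $\,Q^2$; the invariant domain may fail to project onto all of $\,Q^2\,$ (hyperbolic base, handled separately); one may have $\,m=0\,$ (where the contradiction is either hyperbolicity or infinite-dimensionality of $\,\mathrm{Aut}$); and $\,X\,$ may be a full disc bundle $\,\Omega_\rho\,$ rather than a punctured one --- this last is exactly where Lemma \ref{SEZIONE} is needed, to show automorphisms preserve the zero section so that $\,SO(3,2)\,$ still acts on $\,\Omega_\rho^*$. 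Relatedly, your parenthetical ``equivalently, by Lemma \ref{SEZIONE} $\,\mathrm{Aut}(\Omega_\rho^*)\,$ cannot act transitively'' is a non sequitur: that lemma concerns automorphisms of $\,\Omega_\rho$, which need not be restrictions of, nor extend to, automorphisms of $\,\Omega_\rho^*$. Your topological argument at that point is fine on its own, but the case analysis leading to it must be completed.
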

 
\begin{proof} 
If the maximal compact subgroup
$\,SO(3) \times SO(2)\,$ has a fixed point in $\,X$, then 
the statement follows from Prop.~3.1 in \cite{GIL}.
 
So let us  assume by contradiction that 
$\,SO(3) \times SO(2)\,$ has no fixed points in $\,X$. Then, as a consequence
of Lemma \ref{OMOGENEO} above,  Prop.~4.8~and~4.10 in \cite{GIL}, the manifold $\,X\,$ is biholomorphic to 
a $\,U$-invariant domain in a line bundle either over the complex affine
quadric $\,Q^2\,$ or over $\,Q^2/{\mathbb{Z}}_2$. Here we allow finite ineffectivity in
order to replace the action of $\,SO(3,{\mathbb{R}})\,$ with the action of 
 its universal covering $\,U=SU(2)$.
If the base is $\,Q^2$, the line bundle is given by $\,L^m:=U^{\mathbb{C}} \times_{\chi^m} {\mathbb{C}}^*$, for some $\,m \in {\mathbb{Z}}$, with projection $\,p:L^m \to Q^2\,$ given by $\,[g,z] \to gK^{\mathbb{C}}\,$
(cf. Sect.2). 
We distinguish several cases.

If $\,p(X)\,$ does not coincide with $\,Q^2$, then $\,p(X)\,$
is Kobayashi hyperbolic 
and so is $\,X\,$ by  Thm.~3.2.15 in \cite{Kob} 
(cf.~the~proof~of~Thm.~5.5~in~\cite{GIL}).
Then,
as a consequence of  Prop.~3.2 in \cite{GIL} the group 
$\,SO(3) \times SO(2)\,$ has a fixed point,
giving a contradiction.

If $\,p(X)=\,Q^2\,$ and $\,m=0$, analogous arguments as in the
proof of Thm.~5.5 in \cite{GIL} imply that either $\,X\,$
is Kobayashi hyperbolic or $\, Aut(X)\,$ 
is infinite dimensional, giving again a  contradiction.

If $\,p(X)=\,Q^2\,$ and $\,m\not =0$, then one checks that 
$\,X\,$ is biholomorphic
either to a disc bundle $\,\Omega_\rho\,$ or to a
punctured disc bundle $\,\Omega_\rho^*$. As a 
consequence of Lemma \ref{SEZIONE},
in both cases
$\,SO(3,2)\,$ acts on $\,\Omega_\rho^*$.
If  $\,\Omega_\rho^*  \not = \Omega_{max}^* $, then $\,\Omega^*_\rho\,$
is Kobayashi hyperbolic by Theorem \ref{MAIN} and one obtains a contradiction as 
 above.
 
 In the case when 
 $\,\Omega_\rho^*   = \Omega_{max}^* \,$
consider the projection
 $\,P:\Omega_{max}^* \to {\mathbb{B}}_1^*(0,0)/\Gamma_m\,$ 
  introduced
in the proof of  Lemma  \ref{SEZIONE} and note that
every fiber $\,F\,$ of $\,P\,$
 is biholomorphic to 
$\,{\mathbb{C}}$. Then hyperbolicity of $\,{\mathbb{B}}_1^*(0,0)/\Gamma_m\,$ implies
that for every $\,g \in SO(3,2)\,$ the composition  $\,P \circ g|_F\,$
is constant. That is, $\,g\,$ maps fibers to fibers and consequently
the $\,SO(3,2)$-action on   $\,\Omega_{max}^*\,$ pushes down to 
an action on $\,{\mathbb{B}}_1^*(0,0)/\Gamma_m$. By hyperbolicity of
$\,{\mathbb{B}}_1^*(0,0)/\Gamma_m\,$
such an action is necessarily proper and consequently every isotropy
subgroup is contained in a copy of the maximal one. It follows
that  the minimal real
dimension of every $\,SO(3,2)$-orbit in
$\,{\mathbb{B}}_1^*(0,0)/\Gamma_m\,$ is six. Since  $\,{\mathbb{B}}_1^*(0,0)/\Gamma_m\,$
is a complex 2-dimensional manifold, this gives a contradiction. 

Similar arguments apply to the case when
$\,X\,$ is biholomorphic to 
a $\,U$-invariant domain in a line bundle over  $\,Q^2/{\mathbb{Z}}_2\,$
 and we omit the details. \qed
\end{proof} 
 
 \nbigskip

\hfill
  
%------------------------------------------------------------------------------
%---------------------BIBLIOGRAPHY-------------------------------------------
%------------------------------------------------------------------------------

\medskip
\bigskip

\end{document}